\newtheorem{thm}{Theorem}[section]
\newtheorem{lemma}{Lemma}[section]
\newtheorem{prop}{Proposition}[section]
\newcommand{\R}{\ensuremath{\mathbb{R}}}
\newcommand{\Z}{\ensuremath{\mathbb{Z}}}
\newcommand{\C}{\ensuremath{\mathbb{C}}}
\newcommand{\N}{\ensuremath{\mathbb{N}}}
\newcommand{\F}{\ensuremath{\mathcal{F}}}
\newcommand{\MM}{\ensuremath{\mathcal{M}}}
\newcommand{\RR}{\ensuremath{\mathcal{R}}}
\newcommand{\LL}{\ensuremath{\mathcal{L}}}
\newcommand{\PP}{\ensuremath{\mathbf{P}}}
\newcommand{\Si}{\ensuremath{\mathfrak{S}}}
\newcommand{\1}{\ensuremath{\mathbf{1}}}
\newcommand{\fa}{\ensuremath{f(\frac{\log\,d_1}{\log\,R})}}
\newcommand{\fb}{\ensuremath{f(\frac{\log\,d_2}{\log\,R})}}
\newcommand{\x}{\ensuremath{\mathbf{x}}}
\newcommand{\y}{\ensuremath{\mathbf{y}}}
\newcommand{\z}{\ensuremath{\mathbf{z}}}
\newcommand{\s}{\ensuremath{\mathbf{s}}}
\newcommand{\bt}{\ensuremath{\mathbf{t}}}
\newcommand{\bal}{\ensuremath{\mathbf{\alpha}}}
\newcommand{\baa}{\ensuremath{\mathbf{a}}}
\newcommand{\bb}{\ensuremath{\mathbf{b}}}
\newcommand{\uu}{\ensuremath{\mathbf{u}}}
\newcommand{\vv}{\ensuremath{\mathbf{v}}}
\newcommand{\h}{\ensuremath{\mathbf{h}}}
\newcommand{\ls}{\ensuremath{\lesssim}}
\newcommand{\subs}{\ensuremath{\subseteq}}
\newcommand{\eps}{\ensuremath{\varepsilon}}
\newcommand{\La}{\ensuremath{\Lambda}}
\newcommand{\al}{\ensuremath{\alpha}}
\newcommand{\be}{\ensuremath{\beta}}
\newcommand{\ga}{\ensuremath{\gamma}}
\newcommand{\te}{\ensuremath{\theta}}
\newcommand{\de}{\ensuremath{\delta}}
\newcommand{\De}{\ensuremath{\Delta}}
\newcommand{\si}{\ensuremath{\sigma}}
\newcommand{\om}{\ensuremath{\omega}}
\newcommand{\hf}{\ensuremath{\widehat{f}}}
\newcommand{\eq}{\begin{equation}
\newcommand{\ee}{\end{equation}}}
\newcommand{\p}{\ensuremath{\mathfrak{p}}}
\numberwithin{equation}{section}
\begin{document}
\title{Almost prime solutions to diophantine systems of  high rank}
\author{\'Akos Magyar and Tatchai Titichetrakun}
\thanks{1991 Mathematics Subject Classification. 11D72, 11P32.\\
The first author is supported in part by Grants DMS-1600840 and ERC-AdG. 321104.}

\begin{abstract}
Let $\F$ be a family of $r$ integral forms of degree $k\geq 2$ and $\LL=(l_1,\ldots,l_m)$ be a family of pairwise linearly independent linear forms in $n$ variables $\x=(x_1,...,x_n)$. We study the number of solutions $\x\in[1,N]^n$ to the diophantine system $\F(\x)=\vv$ under the restriction that $l_i(\x)$ has a bounded number of prime factors for each $1\leq i\leq m$. We show that the system $\F$ have the expected number of such ``almost prime" solutions under similar conditions as was established for existence of integer solutions by Birch.
\end{abstract}

\maketitle

\section{Introduction.} In general, proving the existence of integer solutions to a diophantine system is not possible, however the problem becomes much more feasible if the system has sufficiently large rank, or have high degree of symmetry. Indeed, it was shown by Birch \cite{Bi} and later by Schmidt \cite{Sch}, that local to global type asymptotic formulas can be attained for the number of solutions of diophantine systems whose rank is exponentially large with respect to its degree.
\\\\
It is natural expect that similar results should hold when the solutions are restricted to special sequences such as the prime numbers. For diagonal systems this has been done by Hua \cite{Hua}, who, extending the methods of Vinogradov \cite{Vin}, derived asymptotics for the number of solutions consisting of primes. For diagonal quadratic and cubic equations various further refinements were obtained \cite{Wol}, \cite{Liu}, \cite{Bru} considering both prime and almost prime solutions combining the circle and sieve methods.
\\\\
It was shown only recently \cite{CM1} that local to global type principles hold for the number of prime solutions for any diophantine system whose rank is sufficiently large with respect to the degree and number of equations. The methods there employed certain ideas from arithmetic combinatorics, leading to tower-exponential type conditions on the rank. It is expected that such results should hold under the essentially the same rank conditions as needed for the existence of integer solutions. For linear systems a crucial step in studying prime solutions is to establish certain pseudo-randomness properties of the \emph{almost primes}, expressed in the so-called \emph{linear forms conditions} of Green and Tao \cite{GT}. The aim of the present article is to obtain similar results when the variables are restricted to the solution set of a diophantine system, see Theorem 1.2 below. As corollary
we show that if the system $\F$ has sufficiently large rank, then there are solutions $\x$ to $\F(\x)=\vv$ such that $l_i(\x)$ has a bounded number of prime factors, simultaneously for $1\leq i\leq m$. In fact, subject to some natural local conditions, one obtains the expected number of such solutions $x\in [1,N]^n$, as $N\to\infty$, see Theorem 1.1.

In the special case $m=n$, $l_i(\x)=x_i$ one gets almost prime solutions, a fact which is well-known in sieve theory, as long as one is able count the number of solutions in congruence classes of large moduli \cite{HR}. Our method is based on the sieve of Goldston-Yildirim \cite{GMPY} and it may extend to study almost prime values of certain systems of higher degree forms $\mathcal{G}=(G_1,\ldots,G_m)$ at least when the forms $G_i$ has ``linear parts" \cite{TZ}, although we do not pursue this here. Such results might also serve as a step toward a different approach to count prime solutions for translation invariant diophantine systems, considering the primes as a dense subset of the almost primes. Solutions to translation invariant systems in dense subsets of integers have been established recently in \cite{Hen}, \cite{Keil}, \cite{Matt}.
\\\\
To state our main results, following \cite{Bi}, define the rank of the system $Rank (\F)$, as the codimension of the singular variety $V^*_\F\subs \C^n$, consisting of points $\z\in\C^n$ where the Jacobian $\partial\F/\partial\z$ drops rank.
Note that for a single quadratic form $F(\x)=Ax\cdot x$ this agrees with the rank of the underlying matrix $A$. Recall the classical result of Birch \cite{Bi} which states that if
\[ Rank (\F)> r(r+1)(k-1)2^{k-1},\]
then the number of solutions $\x\in [1,N]^n$ to the system $\F(\x)=\vv$ satisfies
\[M_F(N)\,=\, N^{n-kr}\, J(N^{-k}\vv)\prod_p \si_p(\vv)\,+\,O(N^{n-kr-\de}),\]
for some $\de>0$. Here $\si_p(\vv)$ is the local factor representing the density of solutions among the $p$-adic integers and $J(\uu)$ is the so-called singular integral representing the density of real solutions $\x\in[0,1]^n$ to $\F(\x)=\uu$. It is well-known that $\si_p(\vv)\neq 0$ if there is a non-singular solution among the $p$-adic integers and $J(\uu)\neq 0$ if there is a non-singular real solution $\uu\in (0,1)^n$, see \cite{Bi},\cite{Sch}.
\\\\
For $0<\eps<1$ and $N\geq 1$ let $\PP^\eps(N)$ denote set of natural numbers such that each prime divisor of $x$ is at least $N^\eps$. Note that each $x\in \PP^\eps(N)$, $1\leq x \leq N$ has at most $l=[1/\eps]$ prime factors. For given $\vv\in\Z^n$ let
\[\mathcal{M}_{\F,\LL}^\eps(N):=|\{\x\in [N]^n;\ \F(\x)=\vv,\,l_i(\x)\in \PP^\eps(N)\ 1\leq i\leq m\}.\]
For a fixed prime $p$ define the local density, depending on the family $\LL$ as well,
\eq\label{0.1}
\si^*_p(\vv):=\,\frac{p^m}{(p-1)^m}\,\lim_{l\rightarrow \infty} p^{l(n-r)}\,|\{x\in\Z_{p^l}^n;\
\F(\x)\equiv\vv\ (mod\ p^l),\ (\LL(\x),p)=1\}|,
\ee
provided the limit exists, where by $(\LL(\x),p)=1$ we mean that $l_i(\x)$ is not divisible by $p$ for every $1\leq i\leq m$. Our main result is the following.

\begin{thm}\label{thm1.1}
Let $\F=(F_1,\ldots,F_r)$ be a system of $r$ integral forms of degree $k$ in $n$ variables such that
\eq\label{1.0} Rank (\F)> r(r+1)(k-1)2^{k-1}.\ee
Let $\LL=(l_1,\ldots,l_m)$ be a pairwise linearly independent family of integral linear forms.
Then there exists a constant $\eps=\eps(m,k,r)>0$ such that
\eq\label{1.1}\mathcal{M}_{\F,\LL}^{\eps}(N)\geq c_0\ N^{n-kr}\,(log\,N)^{-m} J(N^{-k}\vv)\,\prod_p \si^*_p(\vv),\ee
for some constant $c_0=c_0(\F,\LL)>0$. Moreover one may take $\eps= (2^8\,m^{3/2}r^2(r+1)(r+2)k(k+1))^{-1}$.
\end{thm}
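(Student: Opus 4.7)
The plan is to combine a multidimensional lower-bound sieve with a uniform Birch-type asymptotic in arithmetic progressions, in the spirit of Goldston--Yildirim. Set $R=N^\eps$, and to each linear form $l_i$ attach a truncated divisor-sum weight $\nu_R(l_i(\x))=\sum_{d\mid l_i(\x),\,d\le R}\lambda(d)$; form the product $\beta(\x)=\prod_{i=1}^m\nu_R(l_i(\x))$. Choose the coefficients $\lambda(d)$ so that $\beta$ is a nonnegative lower-bound sieve weight (e.g.\ Rosser--Iwaniec or Richert type, one coordinate at a time, permissible since the $l_i$ are pairwise linearly independent) satisfying $\beta(\x)\gs(\log N)^{-m}\prod_i\1_{l_i(\x)\in\PP^\eps(N)}$ pointwise. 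It then suffices to prove
\[
\sum_{\substack{\x\in[N]^n\\ \F(\x)=\vv}}\beta(\x)\ \gs\ N^{n-kr}\,J(N^{-k}\vv)\prod_p\si_p^*(\vv).
\]

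Expanding $\beta$ over divisors rewrites the left-hand side as $\sum_\mathbf{d}\lambda(\mathbf{d})\,\mathcal{N}(\mathbf{d};\vv;N)$ where $\lambda(\mathbf{d})=\prod_i\lambda(d_i)$ ranges over squarefree $d_i\le R$, and
\[
\mathcal{N}(\mathbf{d};\vv;N)=\#\{\x\in[N]^n:\F(\x)=\vv,\ d_i\mid l_i(\x)\ \forall i\}.
\]
The technical heart of the argument is a uniform Birch-type asymptotic
\[
\mathcal{N}(\mathbf{d};\vv;N)=N^{n-kr}J(N^{-k}\vv)\,\si^{\mathbf{d}}(\vv)+O\!\bigl(D^C N^{n-kr-\delta}\bigr),\qquad D:=\textstyle\prod_i d_i\le N^{m\eps},
\]
where $\si^\mathbf{d}(\vv)=\prod_p\si_p(\vv;\mathbf{d})$ is an Euler product of local densities compatible with the divisibility constraints. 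To prove this, parametrize $\{\x\in\Z^n:d_i\mid l_i(\x)\ \forall i\}$ as a sublattice $\La_\mathbf{d}\subs\Z^n$ of index $D$ (pairwise linear independence gives the index control) and rerun Birch's circle method on $\La_\mathbf{d}$: a Weyl/van der Corput differencing argument over the sublattice handles the minor arcs, while the major arc analysis produces the singular integral $J$ together with the product structure of $\si^\mathbf{d}$, all with explicit polynomial control in $D$.

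Substituting this asymptotic and summing over $\mathbf{d}$, the singular integral $J(N^{-k}\vv)$ factors out (being independent of $\mathbf{d}$), and multiplicativity of $\si^\mathbf{d}$ in $\mathbf{d}$ converts $\sum_\mathbf{d}\lambda(\mathbf{d})\si^\mathbf{d}(\vv)$ into an Euler product. For each prime $p\le R$ the local fundamental lemma turns the weighted divisor sum into an indicator of $(\LL(\x),p)=1$ up to an admissible error, producing the sifted local density $((p-1)/p)^m\si_p^*(\vv)$ of \eqref{0.1}; Mertens's theorem then gives $\prod_{p\le R}(1-1/p)^m\sim c(\log N)^{-m}$, supplying the $(\log N)^{-m}$ factor of \eqref{1.1}, while the primes $p>R$ contribute their full local densities. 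The aggregated error term, bounded by $|\lambda(\mathbf{d})|\le 1$ and summed against $D\le N^{m\eps}$, is absorbed provided $m\eps(C+1)<\delta$, which fixes the admissible range of $\eps$.

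The principal obstacle is the uniform Birch asymptotic: Birch's original argument fixes $\vv$ and does not track dependence on auxiliary divisibility constraints, so one must redo the Weyl estimate on $\La_\mathbf{d}$, carefully following the van der Corput/Weyl differencing to show that the effective exponent $C$ is polynomial in $r,k$, and balance this loss against the saving $\delta$ arising from the rank hypothesis \eqref{1.0}. The explicit value $\eps=(2^8\,m^{3/2}r^2(r+1)(r+2)k(k+1))^{-1}$ is tuned to this balance: the polynomial in $r,k$ matches the exponent $C$ from Birch's Weyl inequality (compare the exponent $r(r+1)(k-1)2^{k-1}$ in the rank hypothesis), while the factor $m^{3/2}$ accounts for the combined cost of $m$ sieves running simultaneously---one power of $m$ from summing the sieve variables over $\mathbf{d}\le R^m$ and a further $m^{1/2}$ from Cauchy--Schwarz losses in controlling cross terms among the $\nu_R(l_i)$ when assembling the singular series.
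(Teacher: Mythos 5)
Your plan fails at its central step: the pointwise bound $\beta(\x)\gs(\log N)^{-m}\prod_i\1_{l_i(\x)\in\PP^\eps(N)}$, together with $\beta\ge 0$, is not something a Rosser--Iwaniec or Richert lower-bound sieve can deliver. A lower-bound sieve weight $\nu^-_R(n)=\sum_{d\mid n,\,d\le R}\lambda^-_d$ satisfies $\nu^-_R(n)\le\1_{(n,P(z))=1}$ \emph{pointwise} (so $\nu^-_R(n)\le 0$ whenever $n$ has a small prime factor, and $\nu^-_R(n)\le 1$ otherwise); the lower bound is an inequality for \emph{averages} $\sum_n a_n\nu^-_R(n)$, not for individual values. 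In particular $\nu^-_R$ takes negative values, so the product $\prod_i\nu^-_R(l_i(\x))$ is neither nonnegative nor dominated coordinatewise, and the ``one coordinate at a time'' heuristic gives no control of its sign. This is precisely the obstruction that forces the paper (following Goldston--Pintz--Yildirim) to take the \emph{square} $\La_R^2(l_1(\x)\cdots l_m(\x))$: nonnegativity is automatic, but the price is that the weight is no longer supported on sifted $\x$, and one must separately prove the concentration estimate \eqref{1.2.2} showing the mass on the non-sifted set has relative size $O((\eps/\eta)^2)$, and then use the crude pointwise \emph{upper} bound $\La_R\le 2^{m/\eps}$ on the sifted set to convert the weighted inequality \eqref{2.40} into a count. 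Your decomposition has no analogue of that concentration step and would not close.

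Two further quantitative mismatches. First, your weight expands over independent divisors $d_1,\dots,d_m\le R$ of $l_1(\x),\dots,l_m(\x)$, so the required level of distribution in the uniform Birch asymptotic is $D\le R^m=N^{m\eps}$, which degrades $\eps$ by a factor roughly $m/2$ compared with the paper's level $D=[d_1,d_2]\le R^2$ arising from the single squared weight $\La_R^2$ attached to the product $l_1(\x)\cdots l_m(\x)$; the whole point of the GPY device is to sieve all $m$ coordinates simultaneously at level $R^2$. Second, your heuristic for the $m^{3/2}$ in the final value of $\eps$ (Cauchy--Schwarz losses among cross terms) does not reflect what the paper actually does: the $m^{3/2}$ comes from the comparison $c'_{m+1}(f)<32m^2\,c_m(f)$ for the explicit choice $f(x)=(1-x)_+^{8m}$ in Theorem~\ref{thm1.2}, and from enforcing $c'_{m+1}(f)\,(\eps/\eta)^2\le \tfrac12 c_m(f)$ so that the concentration term \eqref{1.2.2} is dominated by \eqref{1.2.1}; there is no cross-term/Cauchy--Schwarz analysis of the kind you describe. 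The sublattice/uniform-Birch ideas in your second and third paragraphs are in the right spirit (they correspond to Proposition~\ref{prop1.1} and the Euler product manipulations of Section~2), but without a correct nonnegative sieve weight and a concentration substitute, the argument does not assemble into a proof.
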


Thus the conditions on the existence of almost prime solutions are essentially the same as those of for integer solutions, the difference being the natural requirement to have $p$-adic solutions at which the forms $l_i$ take values among the $p$-adic units.
\\\\
The key to prove Theorem \ref{thm1.1} is to study a weighted sum over the solutions with weights that are concentrated on numbers having few prime factors. Such weights have been defined by Goldston, Pintz and Yildirim \cite{GPY} in their seminal work on gaps between the primes. For given $0<\eta<1$ and  $1\leq R\leq N^\eta$ and define
\eq\label{1.1.1}\La_R(m):=\sum_{d|m} \mu(d)\,f(\frac{log\,d}{log\,R}),\ee
where $\mu$ is the M\"{o}bius function.
\\\\
We set $f(x)=(1-x)_+^{8m}$ and follow the Fourier analytic approach in \cite{TG} as opposed to the contour integration method of \cite{GPY}. Using the so-called "$W$-trick" introduced by Green and Tao in \cite{GT1} to bypass the contribution of small primes, let $\om=\om (\F,\LL)$ be a fixed positive integer depending only on the system $(\F,\LL)$, and let $W:=\prod_{p\leq \om} p$, the product of primes up to $\om$. We write $(\x,W)=1$ if $\x=(x_1,\ldots, x_n)$ such that $(x_i,W)=1$ for all $1\leq i\leq n$. Under the conditions of Theorem \ref{thm1.1} our key estimates will be

\begin{thm}\label{thm1.2} Let $\F=(F_1,\ldots,F_r)$ be a system of $r$ integral forms of degree $k$ in $n$ variables satisfying the rank condition \eqref{1.0} and let $\LL=(l_1,\ldots,l_m)$ be a pairwise linearly independent family of integral linear forms. Let $\La_R$ be as given in \eqref{1.1.1} associated to the function $f(x)=(1-x)_+^{8m}$. Then there exists $\eta=\eta(r,k)>0$ such that for $R\leq N^\eta$,

\begin{multline}\label{1.2.1}
\sum_{\substack{\x\in[N]^n,\,\F(\x)=\vv\\ (\LL(x),W)=1}} \La_R^2(l_1(\x)\cdots l_m(\x))\,=\,
\ c_m(f)\,N^{n-rk}(log\,R)^{-m} \Si^*(N,\vv)\\
\times\,(1+o_{\om\to\infty}(1)+O((log\,R)^{-1}))\quad\quad\quad
\end{multline}
\\
where $\ \Si^*(N,\vv)=J(N^{-k}\vv)\prod_p \si^*_p(v)$, and
\[
c_m(f)=\int_0^\infty f^{(m)}(x)^2\,\frac{x^{m-1}}{(m-1)!}\,dx.\]
Moreover one may take $\ \eta(r,k)=(8r^2(r+1)(r+2)k(k+1))^{-1}$.
\\\\
In addition, for given $0<\eps<\eta\leq \eta(r,k)$,

\begin{multline}\label{1.2.2}\sum_{\substack{\x\in[N]^n,\,\LL(\x)\notin (\PP^\eps(N))^m\\(\LL(\x),W)=1,\,\F(\x)=\vv}} \La_R^2(l_1(\x)\cdots l_m(\x)) \,\leq\,c'_{m+1}(f)\left(\frac{\eps}{\eta}\right)^2\ N^{n-rk}(log\,R)^{-m} \Si^*(N,\vv)\\
\times\,(1+o_{\om\to\infty}(1)+O((log\,R)^{-1}))\quad\quad\quad
\end{multline}
with
\[c'_{m+1}(f)=2m\,\int_0^\infty f^{(m+1)}(x)^2\,\frac{x^{m-1}}{(m-1)!}\,dx.\]
\end{thm}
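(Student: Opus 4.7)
The plan is to expand $\La_R^2$ as a double divisor sum, reduce the problem to counting solutions of $\F(\x)=\vv$ in arithmetic progressions determined by divisibility of $l_j(\x)$ by certain moduli, apply Birch's circle method uniformly in these moduli, and finally evaluate the resulting multiplicative sum over divisors in the style of Goldston--Pintz--Yildirim to extract the leading constant $c_m(f)(\log R)^{-m}$.

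Squaring \eqref{1.1.1} and using that $f$ is supported in $[0,1]$, only $d_1,d_2\leq R$ contribute. Since the forms $l_j$ are pairwise linearly independent, any squarefree $d\,|\,l_1(\x)\cdots l_m(\x)$ factors essentially uniquely as $d=\prod_j d^{(j)}$ with $d^{(j)}\,|\,l_j(\x)$, the only ambiguity coming from primes dividing some fixed resultant of pairs $l_i,l_j$; these are absorbed by choosing $\om$ sufficiently large. Writing $d_i=\prod_j d_{ij}$ and interchanging summations, the left-hand side of \eqref{1.2.1} becomes
\[\sum_{\mathbf{d}=(d_{ij})}\mu(d_1)\mu(d_2)\fa\fb\, N(\F,\vv;\mathbf{d}),\]
where $N(\F,\vv;\mathbf{d})$ counts $\x\in[N]^n$ with $\F(\x)=\vv$, $d_{ij}\,|\,l_j(\x)$ for all $i,j$, and $(\LL(\x),W)=1$. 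The next task is to apply the Hardy--Littlewood circle method as in \cite{Bi}, tracking the congruence conditions imposed by $\mathbf{d}$: the rank assumption \eqref{1.0} yields a Weyl-type minor-arc estimate with power saving $N^{-\de}$, and provided $[d_1,d_2]\leq R^2\leq N^{2\eta}$ with $\eta\leq(8r^2(r+1)(r+2)k(k+1))^{-1}$, the minor-arc contribution is absorbed. The major-arc analysis factorizes into the singular integral $J(N^{-k}\vv)$ times a product of local densities $\si_p^*(\vv;\mathbf{d})$ that reduce to $\si_p^*(\vv)$ at primes $p\nmid [d_1,d_2]W$.

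Extracting the generic local factors produces a multiplicative function $g([d_1,d_2])$ depending only on the combined modulus, and reduces the problem to evaluating
\[\sum_{d_1,d_2\leq R}\mu(d_1)\mu(d_2)\fa\fb\,g([d_1,d_2]).\]
This is a standard Goldston--Pintz--Yildirim correlation sum: following the Fourier-analytic approach of \cite{TG}, one writes $f$ via its Mellin/Fourier representation, reduces the sum to a contour integral involving factors of $\zeta(s)^m$, and extracts the leading pole of order $m$, giving $c_m(f)(\log R)^{-m}(1+o(1))$ and hence \eqref{1.2.1}. For \eqref{1.2.2} I would use the union bound
\[\1_{\LL(\x)\notin(\PP^\eps(N))^m}\leq \sum_{i=1}^m\sum_{\om<p\leq N^\eps}\1_{p\,|\,l_i(\x)},\]
and rerun the argument imposing the extra condition $p\,|\,l_i(\x)$; the forced prime $p$ introduces an additional derivative on $f$ (producing $f^{(m+1)}$ in place of $f^{(m)}$), while the dyadic sum over $p\in(\om,N^\eps]$ contributes a factor of order $(\log N^\eps/\log N^\eta)^2=(\eps/\eta)^2$, yielding $c'_{m+1}(f)(\eps/\eta)^2$.

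The main obstacle is upgrading Birch's asymptotic to hold uniformly in the moduli $\mathbf{d}$ up to $N^{2\eta}$ with a power-saving error term. Birch's original argument treats fixed moduli, whereas here the error $O(N^{n-rk-\de})$ must be uniform, forcing a careful re-examination of both the Weyl differencing on the minor arcs and the truncation of the singular series on the major arcs with explicit dependence on the modulus; the threshold $\eta(r,k)=(8r^2(r+1)(r+2)k(k+1))^{-1}$ is precisely the one at which this uniformity survives the sieve losses.
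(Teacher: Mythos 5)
Your overall architecture matches the paper's: expand $\La_R^2$ as a double divisor sum, apply a Birch-type circle-method count in arithmetic progressions uniformly in modulus up to a small power of $N$ (this is Proposition~\ref{prop1.1}, proved in Section~4), then evaluate the resulting correlation sum by the Fourier-analytic Goldston--Pintz--Yildirim method of~\cite{TG}; and for \eqref{1.2.2} the union bound over forced primes $q\in(\om,N^\eps]$ together with $|f^{(m)}(x)-f^{(m)}(x+\tau)|\leq\tau|f^{(m+1)}(x)|$ is exactly how the paper produces $c'_{m+1}(f)(\eps/\eta)^2$.

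However there is a genuine gap in the step where you claim that a squarefree $d\mid l_1(\x)\cdots l_m(\x)$ factors ``essentially uniquely'' as $d=\prod_j d^{(j)}$ with $d^{(j)}\mid l_j(\x)$, the only obstruction being primes dividing a fixed resultant, absorbed by taking $\om$ large. This is false: since $l_i,l_j$ are merely pairwise linearly independent linear forms in $n\geq 2$ variables, for \emph{every} large prime $p$ the locus $\{\x\bmod p:\ p\mid l_i(\x),\ p\mid l_j(\x)\}$ is a codimension-$2$ subspace of $\Z_p^n$, hence has $p^{n-2}$ points, not zero. Enlarging $\om$ removes only small primes and cannot make the factorization unique. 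The paper avoids this by never factoring $d$: it retains the joint condition $D\mid l_1(\x)\cdots l_m(\x)$, defines the local density $\ga_D(\vv)$ in \eqref{2.9}, proves multiplicativity via Lemma~\ref{lem2.1} and the Chinese Remainder Theorem, and establishes the key asymptotic $\ga_p(\vv)=m/p+O(p^{-2})$ in Proposition~\ref{prop1.2} by an inclusion--exclusion over the hyperplanes $M_i=\{l_i\equiv 0\}$ in which the pairwise intersections $M_i\cap M_j$ contribute precisely the $O(p^{-2})$ that your unique-factorization claim was implicitly discarding. Your argument could be repaired by replacing the factorization step with this local inclusion--exclusion, but as written the claim is incorrect and not merely imprecise. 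Two smaller points: you should make explicit the $W$-trick decomposition into residue classes $\bb\bmod W$ with $(\LL(\bb),W)=1$ before applying Proposition~\ref{prop1.1} (otherwise the small-prime Euler factors do not match $\si_p^*(\vv)$), and the uniform-in-modulus Birch asymptotic you correctly identify as the main obstacle is not proved in your proposal --- it is the content of Proposition~\ref{prop1.1} and the entire Section~4 of the paper, including the rescaled Weyl differencing in Lemma~\ref{4.1} and the major-arc truncation \eqref{4.24}.
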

$\ $\\
Note that the implicit constants in the above error terms may depend on the parameters $m,n,k,r$ attached to the system $(\F,\LL)$, however we're not indicating that as we're interested in the asymptotic as $N\to\infty$ for a fixed system $(\F,\LL)$; and only in the explicit form of the parameter $\eta=\eta(r,k)$ which determines the number of prime factors of the values $l_i(\x)$ taken at the solutions to $\F(\x)=\vv$.
\\\\
In the proof of Theorem \ref{thm1.2} we'll use the asymptotic for the number of integer solutions $\x\in[N]^n$ to $\F(\x)=\vv$
subject to the congruence condition $\x\equiv \s\ (mod\ D)$, where $D$ is a modulus bounded by a sufficiently small power $N$. This follows from the Birch-Davenport variant of the circle method described in \cite{Bi}, and is summarized in

\begin{prop}\label{prop1.1} Let $\F=(F_1,\ldots,F_r)$ be a family of integral forms of degree $k$ satisfying the rank condition \eqref{1.0}, and For given $D\in\N$ and $\s\in\Z^n$ let
\eq\label{1.3.1} \RR_N(D,\s;\vv):= |\{\x\in[N]^n;\ \x\equiv\s\ (mod\ D),\ \F(\x)=\vv\}|.\ee
\\
Then there exists a constant $\de'=\de'(k,r)>0$ such that the following holds.
\\\\
(i) If $0<\eta' \leq \eta'(r,k):=\frac{1}{4r^2(r+1)(r+2)k^2}$ then for every $1\leq D\leq N^\frac{\eta'}{1+\eta'}$ and $\s\in\Z^n$ one has the asymptotic

\eq\label{1.3.2}
\RR_N(D,\s;\vv)=N^{n-rk}D^{-n}\,J(N^{-k}\vv)\,\prod_p \si_p(D,\s,\vv)+\,O(N^{n-rk-\de'}D^{-n}).
\ee
\\
(ii) Moreover if
\eq\label{1.3.3}
Rank\,(\F) > (r(r+1)(k-1)+rk)2^k
\ee
then the asymptotic formula \eqref{1.3.2} holds for $\eta\leq \frac{1}{4r(r+2)k}$.
\end{prop}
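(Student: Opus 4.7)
The plan is to reduce $\RR_N(D,\s;\vv)$ to an unrestricted Birch-type count on a smaller box, via the affine substitution $\x = D\y + \s$. Under this change of variables, $\x \in [N]^n$ with $\x \equiv \s \pmod D$ becomes $\y$ belonging to a box $B \subset \Z^n$ of side-lengths of order $M := N/D$, while $\F(\x) = \vv$ becomes $\G(\y) = \vv$, where $\G(\y) := \F(D\y + \s)$ has degree $k$ and top-degree part $D^k \F(\y)$. The Jacobian computation $\partial\G/\partial\y = D\cdot(\partial\F/\partial\x)|_{\x = D\y+\s}$ shows that the singular variety of $\G$ is an affine copy of $V_\F^*$, hence $\mathrm{Rank}(\G) = \mathrm{Rank}(\F)$, so the Birch rank condition \eqref{1.0} is preserved. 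Moreover, Weyl's $k$-fold differencing in $\y$ produces the same multilinear form as for $\F$, up to an overall factor $D^k$ on the frequency side, so all minor-arc and Weyl-type estimates of \cite{Bi} apply to $\G$ on $B$ verbatim.

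Applying the Birch--Davenport circle method to $S(\bal) := \sum_{\y \in B} e(\bal \cdot \G(\y))$ via
\[
\RR_N(D,\s;\vv) = \int_{\mathbb{T}^r} S(\bal)\, e(-\bal\cdot\vv)\, d\bal,
\]
yields an asymptotic of the form
\[
\RR_N(D,\s;\vv) = M^{n-rk}\, J_\G\, \prod_p \si_{p,\G}(\vv) + O(M^{n-rk-\de}),
\]
for some $\de = \de(k,r) > 0$, where $J_\G$ is the singular integral and $\si_{p,\G}(\vv)$ the $p$-adic density of $\G$. A rescaling $\y = M\y'$, together with $DM = N$ and the fact that $\F(N\y' + \s) = N^k \F(\y') + O(N^{k-1})$, identifies $J_\G = J(N^{-k}\vv)$. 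For the local factors, at primes $p \nmid D$ the map $\y \mapsto D\y + \s$ is a bijection on $(\Z/p^l)^n$, so $\si_{p,\G}(\vv)$ agrees with the local factor of $\si_p(D,\s,\vv)$; at primes $p \mid D$ a standard CRT plus Hensel-lifting argument absorbs the $p$-adic part of $D^{rk}$, giving $D^{rk}\prod_{p \mid D} \si_{p,\G}(\vv) = \prod_{p \mid D} \si_p(D,\s,\vv)$. Combining converts the main term into $N^{n-rk}\, D^{-n}\, J(N^{-k}\vv)\, \prod_p \si_p(D,\s,\vv)$ as claimed.

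For the error term, $M^{n-rk-\de} = N^{n-rk-\de}\, D^{-(n-rk-\de)}$, which is at most $N^{n-rk-\de'}\, D^{-n}$ precisely when $D^{rk+\de} \leq N^{\de-\de'}$. Choosing a suitable $\de' \in (0,\de)$, this is equivalent to $D \leq M^{\eta'}$, i.e.\ $D \leq N^{\eta'/(1+\eta')}$, the admissible $\eta'$ being governed by Birch's minor-arc exponent for a form of the prescribed rank. Tracking the explicit constants in \cite{Bi} under the rank hypothesis \eqref{1.0} gives $\eta'(r,k) = (4r^2(r+1)(r+2)k^2)^{-1}$, which proves (i). For (ii), the stronger rank hypothesis \eqref{1.3.3} supplies the extra $rk$ units of rank needed to perform Birch's sharper $k$-th order differencing with a larger exponential-sum saving, improving the Weyl bound and enlarging the admissible range to $\eta' \leq (4r(r+2)k)^{-1}$.

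The main obstacle is the careful accounting of the local factors at primes dividing $D$: the rescaling by $D^k$ interacts nontrivially with Hensel lifting and with the intrinsic definition of $\si_p(D,\s,\vv)$, and one must verify that the resulting factor of $D^{rk}$ is exactly what is needed to convert $M^{n-rk}$ into $N^{n-rk}D^{-n}$ without spurious sub-dominant corrections. Beyond this bookkeeping no new circle-method input is required; the minor-arc bounds and major-arc dissection are imported directly from \cite{Bi} applied to $\G$.
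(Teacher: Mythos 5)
Your substitution $\x = D\y+\s$ and the reduction to a Birch-type count on a smaller box is indeed how the paper starts, and your bookkeeping of local factors via CRT and the rescaling of the singular integral is correct. However, the central claim that ``all minor-arc and Weyl-type estimates of \cite{Bi} apply to $\G$ on $B$ verbatim'' conceals the genuine difficulty, and the way you derive the constraint on $D$ (purely from balancing $M^{n-rk-\delta}$ against $N^{n-rk-\delta'}D^{-n}$) is incomplete.

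The issue is that the leading form of $\G(\y)=\F(D\y+\s)$ is $D^k\F(\y)$, with $D$ growing with $N$, so one cannot invoke Birch's theorem for the fixed system $\F$ with a uniform exponent. What one \emph{does} get, after Weyl differencing (which kills the lower-degree parts of $\G$), is the bound
\[
\bigl|N_1^{-n}S_N(D,\s,\bal)\bigr|^{2^{k-1}} \lesssim N_1^{-kn}\sum_{\h}\prod_j\min\bigl\{N_1,\ \|D^k\bal\cdot\Phi_j(\h)\|^{-1}\bigr\},
\]
i.e.\ Birch's geometry-of-numbers machinery yields information about the rational approximations of $D^k\bal$, not of $\bal$. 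Converting ``$D^k\bal \notin \MM(\theta)$'' into ``$\bal\notin \MM(\theta')$'' inflates the admissible denominators by a factor up to $D^k$, and this inflation degrades the minor-arc exponent from $K\theta$ to roughly $K\theta/(1+\eps)$, where $\eps$ is tied to how large $D$ is relative to $N_1$. This is precisely Lemmas~\ref{lemma4.2} and~\ref{lemma4.3} in the paper, and it is \emph{this} step---not the final exponent bookkeeping $M^{n-rk-\delta}\leq N^{n-rk-\delta'}D^{-n}$, which is a secondary (though also necessary) constraint given in \eqref{4.14}--\eqref{4.15}---that forces $\eta'\leq \eta'(r,k)$ of the stated size. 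Your explanation of part (ii) has the same mismatch: the stronger rank in \eqref{1.3.3} is not used to ``improve the Weyl bound'' but to permit $\eps$ to be taken close to $1$ rather than $\sim \bigl(r(r+1)k\bigr)^{-1}$ in the minor-arc transfer, which is what enlarges the admissible $\eta'$. Finally, you flag the local-factor bookkeeping as the main obstacle; in the paper's treatment that part is routine (it follows from multiplicativity and the absolute convergence of the singular series), while the genuine labor is in the exponential-sum analysis above. Your write-up needs the analogue of Lemma~\ref{lemma4.3}---an explicit transfer between the Diophantine properties of $D^k\bal$ and $\bal$ with a quantified loss---to be a complete proof.
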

Here $\si_p(D,\s,\vv)$ represents the density of the solutions among the $p$-adic numbers, more precisely
\eq\label{1.3.4}
\si_p(D,\s,\vv)=\lim_{l\to\infty} \si_p^l(D,\s,\vv),\ \ \si_p^l(D,\s,\vv)=p^{-l(n-r)}|\{\x\in\Z_{p^l}^n;\ \F(D\x+\s)\equiv \vv\ (mod\ p^l)\}|.
\ee
\\
The product form of the main term in \eqref{1.3.2} will allow as to write the expressions on the left side of \eqref{1.2.1} and \eqref{1.2.2} as an integral over an Euler product, which can be asymptotically evaluated using the sieve methods. To understand the local factors of The Euler product one needs to analyze the number of solutions to the system $\F(\x)=\vv$ mod $p$, this has been done it \cite{CM2} based on adapting  Birch's method to finite fields. Here the $W$-trick is quite useful as one has to consider only sufficiently large primes, for which the rank of the $mod\ p$ - reduced variety $V_\F=\{\F(\x)=\vv\}$ remains sufficiently large.
\\\\
The information needed about the Euler factors

\eq\label{1.4.1}
\ga_p(\vv):= \frac{p^{-n}}{\si_p(\vv)}\sum_{\substack{\s\in\Z_p^n,\\ \F(s)\equiv\vv\ (mod\ p)}} \1_{p|l_1(\s)\cdots l_m(\s)}\, \si_p(p,\s,\vv),\ee
is summarized in

\begin{prop}\label{prop1.2} Let $\F$ be a family of $r$ integral forms of degree $k$. If $rank(\F)>r(r+1)(k-1)2^k$ then for all sufficiently large primes $p>\om=\om(\F,\LL)$ one has
\eq\label{1.4.2}
\ga_p(\vv)=\frac{m}{p}+O(p^{-2}).\ee
\end{prop}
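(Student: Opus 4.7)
The plan is to expand the indicator $\mathbf{1}_{p\mid l_1(\s)\cdots l_m(\s)}$ via inclusion--exclusion, interpret each resulting summand as a local density on a linear subspace, and then apply the finite field version of Birch's method from \cite{CM2}. Starting from
\[
\mathbf{1}_{p\mid l_1(\s)\cdots l_m(\s)}\;=\;\sum_{\emptyset\neq I\subseteq\{1,\ldots,m\}}(-1)^{|I|+1}\prod_{i\in I}\mathbf{1}_{p\mid l_i(\s)},
\]
one decomposes $\ga_p(\vv)=\frac{1}{p^n\si_p(\vv)}\sum_{\emptyset\neq I}(-1)^{|I|+1}T_I$, where $T_I$ is the analogue of the sum in \eqref{1.4.1} restricted to those $\s$ with $l_i(\s)\equiv 0\pmod p$ for every $i\in I$. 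A change of variables $\y=\s+p\x$ together with unfolding identifies $T_I=p^n\si_p^I(\vv)$, where $\si_p^I(\vv)$ is the $p$-adic density of solutions to $\F(\y)=\vv$ whose mod-$p$ reduction lies in the subspace $L_I=\{\s:l_i(\s)\equiv 0\pmod p,\ i\in I\}$.

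For $p$ larger than some $\om(\F,\LL)$, the forms $\{l_i\}_{i\in I}$ retain their $\Q$-rank $\rho_I$ modulo $p$; in particular $\rho_I=1$ if $|I|=1$ and $\rho_I\geq 2$ if $|I|\geq 2$ by pairwise linear independence. Parameterizing $L_I$ by a $\Z$-basis, the restriction $\F|_{L_I}$ becomes a family of $r$ integer forms of degree $k$ in $n-\rho_I$ variables. A Jacobian codimension count bounds the drop in Birch rank by an amount depending only on $\rho_I\leq m$ and on $r,k$, so the hypothesis $Rank(\F)>r(r+1)(k-1)2^k$ keeps $\F|_{L_I}$ within the scope of the finite field Birch method of \cite{CM2}. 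That method, together with the fact that the rank hypothesis forces every mod-$p$ solution to be smooth so that Hensel lifting is automatic, yields
\[
\si_p^I(\vv)\;=\;p^{-\rho_I}\,\si_p(\vv)\bigl(1+O(p^{-c})\bigr),\qquad \si_p(\vv)=1+O(p^{-c}),
\]
with some $c=c(r,k)\geq 1$ provided by the strength of the rank condition.

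Assembling the contributions: the $|I|=1$ terms give $p^{-1}(1+O(p^{-c}))$ each, summing to $m/p+O(p^{-1-c})$; the $O(2^m)$ terms with $|I|\geq 2$ satisfy $\rho_I\geq 2$ and each contribute $O(p^{-2})$. Since $c\geq 1$, both sources of error are $O(p^{-2})$, yielding $\ga_p(\vv)=m/p+O(p^{-2})$.

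The main technical difficulty is the rank-drop claim: while $V^*_\F\cap L_I$ has codimension at least $Rank(\F)-\rho_I$ inside $L_I$ by naive intersection, the singular locus $V^*_{\F|_{L_I}}$ of the restriction can be strictly larger, consisting of $\x\in L_I$ where some nonzero covector $v^{T}D\F(\x)$ lies in the annihilator $L_I^{\perp}$. Controlling this excess locus and showing the implied rank loss is at most $O_{r,k}(\rho_I)$ is the key geometric input. The doubling of the rank constant from $2^{k-1}$ in Theorem \ref{thm1.1} to $2^k$ in Proposition \ref{prop1.2} is precisely the slack needed both to absorb this loss and to force $c\geq 1$ in the finite field Birch error.
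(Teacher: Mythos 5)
Your proposal takes a genuinely different route from the paper, but it contains a factual error that creates a real gap. The paper does \emph{not} perform the full inclusion--exclusion over all nonempty $I\subseteq\{1,\ldots,m\}$; it splits the sum defining $\si_p(\vv)\ga_p(\vv)$ according to whether $\s$ is a smooth or a singular point of the reduction of $V_\F$ mod $p$, computes $\si_p(p,\s,\vv)=p^r$ exactly for smooth $\s$ by Hensel lifting (Lemma \ref{3.2}), and then for the resulting finite-field count uses only the two-term Bonferroni sandwich
\[
p^{-n+r}\Bigl(\sum_i\sum_{\s\in M_i}\1_{\F(\s)=\vv}-\sum_{i<j}\sum_{\s\in M_i\cap M_j}\1_{\F(\s)=\vv}\Bigr)\;\leq\;\ga_p^1\;\leq\;p^{-n+r}\sum_i\sum_{\s\in M_i}\1_{\F(\s)=\vv},
\]
so that only codimension $|J|\leq 2$ slices are ever invoked. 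The singular contribution is then disposed of by a crude uniform bound $\si_p^l(p,\s,\vv)\ls p^{r^2k}$ (Lemma \ref{3.3}) multiplied by the point count $|V_\F^*(p)|\ls p^{n-\mathrm{codim}\,V_\F^*}$; the strengthened exponent $2^k$ in the hypothesis is spent exactly here, to make this product $\ls p^{-2}$, not to absorb rank loss in restricted systems.

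The gap in your write-up is the assertion that ``the rank hypothesis forces every mod-$p$ solution to be smooth so that Hensel lifting is automatic.'' This is false: the rank condition controls the \emph{codimension} of the singular locus $V_\F^*(p)$ but does not make it empty, and indeed $\s=\0$ is always a singular solution for homogeneous $\F$ when $\vv=\0$. Without an a priori bound on $\si_p(p,\s,\vv)$ at singular $\s$ (the role played by Lemma \ref{3.3}), the identification $T_I=p^n\si_p^I(\vv)$ together with a Birch-type asymptotic for $\F|_{L_I}$ does not by itself control the sum. A secondary concern: you need your formula $\si_p^I(\vv)=p^{-\rho_I}\si_p(\vv)(1+O(p^{-c}))$ uniformly over all $I$, which requires the restricted systems $\F|_{L_I}$ to keep enough Birch rank for $\rho_I$ as large as $m$; the hypothesis $\mathrm{rank}(\F)>r(r+1)(k-1)2^k$ supplies slack of size $r(r+1)(k-1)2^{k-1}$ against a potential drop of about $r\rho_I$, which is not guaranteed to be enough when $m$ is large. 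The paper sidesteps both issues: Bonferroni caps $|J|\leq 2$, and the singular points are estimated directly. If you want to pursue your decomposition, you would need a substitute for Lemma \ref{3.3} to handle singular $\s$, and you should replace the full alternating sum with the two-sided truncation so that only $\rho_I\leq 2$ is needed.
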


\bigskip

\subsection{Outline and Notations.} The facts about the number of solutions to diophantine systems among integers in a given residue class with respect to a small modulus will be given in Section 4 and will be used throughout the paper. The arguments are straightforward extensions of those of Birch discussed in \cite{Bi}.
In Section 3 we carry out the analysis of certain local factors attached the primes, and prove Proposition \ref{prop1.2}. Here we rely on certain results obtained in \cite{CM2} on the number of solutions of diophantine systems over finite fields, and some well-known facts in algebraic geometry \cite{GL}, \cite{Shm} about the size and the stability of the dimension of homogeneous algebraic sets when reduced $mod\ p$ .
Somewhat unusually, we will prove our main results in Section 2, using the results of Section 3 and Section 4. This is to separate our main arguments relying on the sieve of Goldston-Pintz-Yildirim \cite{GPY}, from those to count integer solutions of diophantine systems based on the Hardy-Littlewood method of exponential sums.
\\\\
The symbols $\mathbb{Z}$, $\mathbb{Q}$, $\mathbb{R}$, and $\mathbb{C}$ denote the integers, the rational numbers, the real numbers, and the complex numbers, respectively. We write $\Z_N$ for the group  $\mathbb{Z}\slash N\Z$ as well as $Z_N^*$ for the multiplicative group reduced residue classes $(mod\ N)$. If $X$ is a set then $\mathbf{1}_X$ denotes a characteristic function for $X$ in a specified ambient space, and on occasion, the set $X$ is replaced by a conditional statement which defines it.
The Landau $o$ and $O$ notation is used throughout the work. The notation $f\ls g$ is sometimes used to replace $f=O(g)$, the implicit constants may depend on the fixed parameters $m,n,k$ and $r$ however we usually do not denote the dependence on them.  By $o_{\om\to\infty}(1)$ we denote a quantity that tends to 0 as $\om\to \infty$. All our estimates are asymptotic as $N\to\infty$, and always assume that $N$ is sufficiently large with respect to $\om$ and the parameters attached to the system $(\F,\LL)$.

\medskip

\section{Proof of the main results.} In this section we introduce the Euler product representation of the weighted sums over the solutions defined in \eqref{1.2.1} and \eqref{1.2.2}, and prove Theorems 1.1 and 1.2 using the main results of Section 3 and Section 4.
\\\\
Let $\phi_N$ be the indictor function of the cube $[1,N]^n$, $\mu$ the M\"{o}bius function and write $\sum'$ for sums restricted to square-free numbers. We'll also use the customary notations $[a,b]$ and $(a,b)$ for the least common multiple and greatest common factor of the numbers $a$ and $b$. If $\bb=(b_1,\ldots,b_n)\in\Z^n$ is such that $(b_i,W)=1$ for all $1\leq i\leq n$, then we write $(\bb,W)=1$. We write $a|b$ If $a$ divides $b$ and $\1_{a|b}$ for the indicator function of this relation. We start by making a few immediate observations about the local factors $\si_p(D,\s,\vv)$ defined in \eqref{1.3.4}.

\begin{lemma}\label{lem2.1} Let $D$, $W$ be square free numbers such that $(D,W)=1$ and let $p$ be a prime.\\
If $(p,DW)=1$ then
\eq\label{2.5.1}
\si_p(DW,\bt,\vv)=\si_p(\vv).
\ee
If $p|D$ and $\bt\equiv \s\ (mod\ D)$ then one has
\eq\label{2.5.2}
\si_p(DW,\bt,\vv)=\si_p(p,\bt,\vv)=\si_p(p,\s,\vv).
\ee
Similarly, if $p|W$ and $\bt\equiv \bb\ (mod\ W)$ then
\eq\label{2.5.3}
\si_p(DW,\bt,\vv)=\si_p(p,\bt,\vv)=\si_p(p,\bb,\vv).
\ee
\end{lemma}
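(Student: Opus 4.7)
The plan is to unwind the definition of $\si_p(D,\s,\vv)$ in \eqref{1.3.4} and observe that in each case the map $\x\mapsto DW\x+\bt$ modulo $p^l$ can be replaced, up to an invertible substitution, by a simpler one. The three cases correspond to whether $p$ divides neither $DW$, the factor $D$, or the factor $W$; since $D$ and $W$ are square-free and coprime, in every case we can factor out at most one $p$ and the complementary factor is a unit in $\Z_{p^l}$.

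For \eqref{2.5.1}, when $(p,DW)=1$ the element $DW$ is invertible mod $p^l$, so $\y:=DW\x+\bt$ is a bijection of $\Z_{p^l}^n$. Thus
\[
|\{\x\in\Z_{p^l}^n:\F(DW\x+\bt)\equiv\vv\;(mod\;p^l)\}|=|\{\y\in\Z_{p^l}^n:\F(\y)\equiv\vv\;(mod\;p^l)\}|,
\]
and dividing by $p^{l(n-r)}$ and letting $l\to\infty$ yields $\si_p(DW,\bt,\vv)=\si_p(\vv)$.

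For \eqref{2.5.2}, write $D=pD'$ with $(D',p)=1$ (possible since $D$ is square-free). Then $DW=p(D'W)$ with $D'W$ a unit mod $p^l$, so the bijection $\y:=D'W\x$ of $\Z_{p^l}^n$ converts $\F(DW\x+\bt)$ to $\F(p\y+\bt)$, giving $\si_p(DW,\bt,\vv)=\si_p(p,\bt,\vv)$. For the second equality, $\bt\equiv\s\;(mod\;D)$ in particular forces $\bt\equiv\s\;(mod\;p)$, so $\bt=\s+p\uu$ for some $\uu\in\Z^n$; then $\F(p\x+\bt)=\F(p(\x+\uu)+\s)$, and the translation $\x\mapsto\x+\uu$ is a bijection of $\Z_{p^l}^n$. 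Case \eqref{2.5.3} is identical after interchanging the roles of $D$ and $W$, writing $W=pW'$ with $(W',p)=1$ and using the substitution $\y:=DW'\x$ and the translation coming from $\bt\equiv\bb\;(mod\;p)$.

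There is essentially no obstacle here; the only thing to be careful about is not needing $\bt\equiv\s\;(mod\;p^l)$ but only $\bt\equiv\s\;(mod\;p)$ for the equality $\si_p(p,\bt,\vv)=\si_p(p,\s,\vv)$, which is why the translation trick absorbs the discrepancy $p\uu$ into the summation variable rather than into the modulus.
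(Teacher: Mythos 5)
Your proposal is correct and matches the paper's proof essentially verbatim: the same bijection $\x\mapsto DW\x+\bt$ for \eqref{2.5.1}, the same factorization $D=pD'$ and substitution $\y:=D'W\x$ followed by the translation absorbing $p\uu$ for \eqref{2.5.2}, and the same symmetry for \eqref{2.5.3}. No discrepancies.
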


\begin{proof} To see \eqref{2.5.1} note that for any $l\in\N$ the transformation $\x\to DW\x+\bt$ is one-one and onto on $\Z_{p^l}^n$.
If $p|D$ then $D=pD'$ with $(p,D'W)=1$, and one may write $DW\x+\bt=p(D'W\x)+\bt$ and the first equality in \eqref{2.5.2} follows by making a change of variables $\y:=D'W\x$ on $\Z_{p^l}^n$. Also $p\y+\bt=p(\y+\uu)+\s$ and the second equality follows by replacing $\y$ with $\y+\uu$. Interchanging the role of $D$ and $W$ \eqref{2.5.3} follows.
\end{proof}
$\ $\\
Let $(\F,\LL)$ be system of integral forms, as given Theorem \ref{thm1.1}. Let $R:=N^\eta$, with $\eta=\eta(r,k)$ as defined in Theorem \ref{thm1.2}, and let $W=\prod_{p\leq \om}p$ for a sufficiently large constant $\om=\om_\F$. To show the validity of \eqref{1.2.1} and \eqref {1.2.2} we define the sums depending on a parameter $\bb$, such that $(\LL(\bb),W)=1$,

\eq\label{2.4}
S_{W,\bb}(N):= \sum_{\substack{\x\equiv\bb\ (mod\ W)\\ \F(\x)=\vv}} \La_R^2(l_1(\x)\cdots l_m(\x))\,\phi_N(\x),
\ee
and for $\om<q$, $q$ prime
\eq\label{2.5}
S_{W,\bb}(N,q):= \sum_{\substack{\x\equiv\bb\ (mod\ W)\\ \F(\x)=\vv}} \1_{q|l_1(\x)\cdots l_m(\x)}\ \La_R^2(l_1(\x)\cdots l_m(\x))\,\phi_N(\x).
\ee

\begin{lemma}\label{lem2.2} We have
\eq\label{2.6}
S_{W,\bb}(N)=N^{n-dr} J(N^{-k}\vv) W^{-n} \Si_{W,\bb}(\vv)\ \sideset{}{'}\sum_{(D,W)=1} h_D(R)\,\ga_D(\vv)\ +O(N^{n-rk-\de}),
\ee
and similarly
\eq\label{2.7}
S_{W,\bb}(N,q)=N^{n-dr} J(N^{-k}\vv) W^{-n} \Si_{W,\bb}(\vv)\ \sideset{}{'}\sum_{(D,W)=1} h_D(R)\,\ga_{[D,q]}(\vv)\ +O(N^{n-rk-\de}),
\ee
where
\eq\label{2.8}
\Si_{W,\bb}(\vv):=\prod_{p|W}\si_p(p,\bb,\vv)\prod_{p\nmid W}\si_p(\vv),
\ee
and
\eq\label{2.9}
\ga_D(\vv):=D^{-n}\sum_{\substack{\s\in \Z_D^n\\ \F(\s)\equiv\vv\ (mod\ D)}} \1_{D|s_1\cdots s_n} \prod_{p|D} \frac{\si_p(p,\s,\vv)}{\si_p(\vv)},
\ee
\eq\label{2.10}
h_D(R):=\sum_{[d_1,d_2]=D} \mu(d_1)\mu(d_2)\, \fa\,\fb.
\ee
\end{lemma}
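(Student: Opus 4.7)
The plan is to expand the squared sieve weight using its Möbius definition \eqref{1.1.1}, exchange orders of summation, and convert the resulting divisibility conditions into congruence conditions so that Proposition \ref{prop1.1} and Lemma \ref{lem2.1} apply. Writing $L(\x):=l_1(\x)\cdots l_m(\x)$ for brevity, expanding yields
\begin{equation*}
\La_R^2(L(\x))\,=\,\sideset{}{'}\sum_{d_1,d_2}\mu(d_1)\mu(d_2)\,\fa\,\fb\,\1_{[d_1,d_2]\mid L(\x)},
\end{equation*}
where the prime indicates squarefreeness and the identity $d_1\mid m,\,d_2\mid m\Leftrightarrow[d_1,d_2]\mid m$ has been used. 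Collecting pairs $(d_1,d_2)$ by $D:=[d_1,d_2]$ produces the coefficient $h_D(R)$ from \eqref{2.10}. Since $(\LL(\bb),W)=1$ forces $(L(\x),W)=1$ whenever $\x\equiv\bb\ (mod\ W)$, the sum is supported on squarefree $D$ coprime to $W$; note also that $f$ is supported on $[0,1]$, so $d_1,d_2\leq R$ and hence $D\leq R^2$.

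For such $D$ the condition $D\mid L(\x)$ depends only on $\x$ modulo $D$. I would partition the inner sum by the residue $\s\in\Z_D^n$, retaining only those $\s$ with $\F(\s)\equiv\vv\ (mod\ D)$ and $D\mid L(\s)$, and invoke the Chinese Remainder Theorem together with $(D,W)=1$: each such $\s$ combines with $\bb\ (mod\ W)$ into a unique $\bt\in\Z_{DW}^n$, and the inner count equals $\RR_N(DW,\bt;\vv)$. Since $D\leq R^2\leq N^{2\eta}$ and $W=O(1)$, Proposition \ref{prop1.1} applies and gives
\begin{equation*}
\RR_N(DW,\bt;\vv)\,=\,\frac{N^{n-rk}}{(DW)^n}\,J(N^{-k}\vv)\prod_p\si_p(DW,\bt,\vv)\,+\,O\!\left(\frac{N^{n-rk-\de'}}{(DW)^n}\right).
\end{equation*}
Lemma \ref{lem2.1}, together with $(D,W)=1$, factors the Euler product as
\begin{equation*}
\prod_p\si_p(DW,\bt,\vv)\,=\,\Si_{W,\bb}(\vv)\prod_{p\mid D}\frac{\si_p(p,\s,\vv)}{\si_p(\vv)},
\end{equation*}
so that summing $D^{-n}$ over the admissible $\s$ recovers exactly $\ga_D(\vv)$ of \eqref{2.9}; multiplying by $h_D(R)$ and summing over $D$ produces the main term of \eqref{2.6}.

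The admissible residues $\s$ number at most $D^n$ and the pairs with $[d_1,d_2]=D$ are $O(D^\eps)$, so the aggregated error from Proposition \ref{prop1.1} is $O(R^{2+\eps}N^{n-rk-\de'}W^{-n})=O(N^{n-rk-\de})$ for some $\de>0$, since the explicit value $\eta(r,k)=(8r^2(r+1)(r+2)k(k+1))^{-1}$ in Theorem \ref{thm1.2} is strictly smaller than the threshold $\eta'(r,k)$ of Proposition \ref{prop1.1}. For \eqref{2.7}, the only change is the additional factor $\1_{q\mid L(\x)}$, which merges with the sieve constraints into $[D,q]\mid L(\x)$; since $q>\om$ ensures $(q,W)=1$, the same CRT parameterization, now modulo $[D,q]W$, produces $\ga_{[D,q]}(\vv)$, while $h_D(R)$ is unaffected as it depends only on the Möbius weights indexed by $d_1,d_2$. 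The principal technical obstacle is the calibration between $\eta$ and $\de'$: summing the per-$D$ errors over $D\leq R^2$ costs a factor $N^{2\eta+o(1)}$, and keeping this a negligible perturbation of the main term is precisely why the explicit exponents in Theorem \ref{thm1.2} and Proposition \ref{prop1.1} are tuned as they are.
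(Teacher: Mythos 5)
Your overall decomposition matches the paper's proof: expand $\La_R^2$ via \eqref{1.1.1}, collect pairs $(d_1,d_2)$ by $D=[d_1,d_2]$ to recover $h_D(R)$, observe the sum is supported on squarefree $D$ coprime to $W$ with $D\leq R^2$, partition the inner sum over $\x$ by residue $\bt\in\Z_{DW}^n$ via CRT, invoke Proposition \ref{prop1.1}, and then factor the local densities via Lemma \ref{lem2.1} to recover $\ga_D(\vv)$. The treatment of \eqref{2.7} via replacing $\1_{D\mid L(\x)}$ by $\1_{[D,q]\mid L(\x)}$ is also the paper's route.

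The genuine gap is in the error bookkeeping. You bound the number of admissible residues by $D^n$ and conclude that the aggregated error is $O(R^{2+\tau}N^{n-rk-\de'}W^{-n})$, asserting this is $O(N^{n-rk-\de})$ because $\eta(r,k)<\eta'(r,k)$. But $\eta<\eta'$ is only the condition allowing Proposition \ref{prop1.1} to be applied to the modulus $DW$; it says nothing about whether the accumulated factor $R^{2+\tau}=N^{2\eta(1+\tau)}$ is dominated by $N^{-\de'}$. That would require $2\eta<\de'$, a relation that does not hold in general: tracing the construction of $\de'$ in Section 4 (via $\de'\approx\de-kr\eta$ with $\de\approx A\te_0/2$ and $\eta<A/(4r(r+2)k\cdot rk(1+\eps))$, so $\de'\lesssim\de\eps$ while $2\eta\gtrsim\de/(rk)$, and $\eps\approx 1/(r(r+1)k)$ is too small for $2\eta\leq\de'$). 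So with the trivial $D^n$ count the error does not close, and your stated reason for why the parameters ``are tuned as they are'' misidentifies the mechanism.

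The paper closes this by exploiting that each admissible residue must satisfy $D\mid L(\s)$: for squarefree $D=\prod p_i$ this forces, for each prime $p_i\mid D$, that $\s$ lie in one of $m$ hyperplanes mod $p_i$, so the number of such $\s\in\Z_D^n$ is at most $m^{\omega(D)}D^{n-1}\ls_\tau D^{n-1+\tau}$. Plugging this in, the per-$D$ error becomes $\ls_\tau D^\tau\cdot D^{n-1+\tau}\cdot N^{n-rk-\de'}(DW)^{-n}=N^{n-rk-\de'}W^{-n}D^{-1+2\tau}$, and
\begin{equation*}
\sum_{D\leq R^2}D^{-1+2\tau}\ls_\tau R^{4\tau},
\end{equation*}
which is absorbed into $N^{n-rk-\de'/2}$ for any $0<\eta<\eta'$ once $\tau$ is taken sufficiently small; no calibration between $\eta$ and $\de'$ is needed. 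You should replace the $D^n$ count with this $D^{n-1+\tau}$ bound, after which the rest of your argument goes through.
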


\begin{proof} We start by writing

\begin{eqnarray}\label{2.11}
S_{W,\bb}(N)&=& \sum_{\substack{x\equiv\ \bb\ (mod\ W)\\\F(\x)=\vv}} \phi_N(\x)\sideset{}{'}\sum_{\substack{[d_1,d_2]|l_1(\x)\cdots l_m(\x)\\ ([d_1,d_2],W)=1}} \mu(d_1)\mu(d_2)\, \fa \,\fb\nonumber\\
&=&\sideset{}{'}\sum_D\sum_{[d_1,d_2]=D} \mu(d_1)\mu(d_2)\, \fa\,\fb\ \sum_{\substack{\x\equiv\bb\ (mod\ W)\\ \F(\x)=\vv}} \1_{D|l_1(\x)\cdots l_m(\x)}\phi_N(\x)\nonumber\\
\end{eqnarray}
\\
The inner sum in $\x$ is zero unless $(D,W)=1$. Indeed if there is a prime $p$ such that $p|D$ and $p|W$, then $p|l_i(\x)$ for some $1\leq i\leq m$ and hence $l_i(\bb)\equiv l_i(\x)\equiv 0\ (mod\ p)$ contradicting our assumption $(l_i(\bb),W)=1$. The conditions $D|l_1(\x)\cdots l_m(\x)$ and $\x\equiv \bb\ (mod\ W)$ depend only on $\x\ (mod\ DW)$, thus one may write
\eq\label{2.11a}
\sum_{\substack{\x\equiv\bb\ (mod\ W)\\ \F(\x)=\vv}} \1_{D|l_1(\x)\cdots l_m(\x)}\phi_N(\x)=\sum_{\substack{\bt\in \Z_{DW}^n,\ \bt\equiv\bb\ (mod\ W)\\ \F(\bt)\equiv\vv\ (mod\ DW)}} \1_{D|l_1(\bt)\cdots l_m(\bt)}
\sum_{\substack{\x\equiv \bt\ (mod\ DW)\\\F(\x)=\vv}} \phi_N(\x).
\ee
Since $D\leq R^2\leq N^\frac{2\eta}{1+\eta}$; by Proposition \ref{prop1.1} this further equals to
\eq\label{2.11b}
\sum_{\substack{\bt\in \Z_{DW}^n\ \bt\equiv\bb\ (mod\ W)\\ \F(\bt)\equiv\vv\ (mod\ DW)}} \1_{D|l_1(\bt)\cdots l_m(\bt)}
N^{n-kr}(DW)^{-n}\,( J(N^{-k}\vv)\prod_p \si_p(DW,\bt,\vv)\\ +\ O(N^{-\de'})),
\ee
with a constant $\de'>0$ depending on the system $\F$, given in Proposition \ref{prop1.1}.
To estimate the contribution of the error terms to the sum $S_{W,\bb}(N)$ given in \eqref{2.11}, note that for a given $D$ the number of pairs $d_1,d_2$ for which $[d_1,d_2]=D$ is $\ls_\tau D^\tau$ for all $\tau>0$, and the summation in $D$ is restricted to $D\leq R^2$ as the function $f(x)$ is supported on $x\leq 1$. Also for a square-free integer $D$ it is easy to see that the number of $\s\in\Z_D^n$ such that $D|l_i(\s)$ is at most $D^{n-1}$. Thus the total error obtained in \eqref{2.11} is bounded by
\eq\label{2.12a}
E_{W,\bb}(N)\ls_\tau N^{n-kr-\de'}\,\sum_{D\leq R^2} D^{-1+\tau}\ls N^{n-kr-\de'/2}.
\ee
\\
The sum in \eqref{2.11b} can be evaluated by a routine calculation using the Chinese Remainder Theorem and the properties of the local factors $\si_p(DW,\bt,\vv)$, given in Lemma \ref{lem2.1}. Indeed, to every $\bt\in \Z_{DW}^n$ satisfying $\bt\equiv \bb\ (mod\ W)$ there is a unique $\s\in\Z_D^n$ such that $\bt\equiv \s\ (mod\ D)$, and in that case $\F(\bt)\equiv \vv\ (mod\ DW)$ is equivalent to $\F(\s)\equiv\vv\ (mod\ D)$ using our assumption that $\F(\bb)\equiv\vv\ (mod\ W)$. Thus
\\
\[
\sum_{\substack{\bt\in \Z_{DW}^n,\ \bt\equiv\bb\ (mod\ W)\\ \F(\bt)\equiv\vv\ (mod\ DW)}} \1_{D|l_1(\bt)\cdots l_m(\bt)} \prod_{p|W}\si_p(p,\bb,\vv)\prod_{p|D} \si_p(p,\bt,\vv) \prod_{p\nmid DW}\si_p(\vv)
\]
\eq\label{2.13}
= \prod_{p|W}\si_p(p,\bb,\vv)\prod_{p\nmid W}\si_p(\vv)\,\sum_{\substack{\s\in \Z_D^n\\ \F(\s)\equiv\vv\ (mod\ D)}} \1_{D|l_1(\s)\cdots l_m(\s)} \prod_{p|D} \frac{\si_p(p,\s,\vv)}{\si_p(\vv)}.
\ee
\\
Then \eqref{2.6} follows from \eqref{2.11}-\eqref{2.13}, and to see the validity of \eqref{2.7} it is enough to remark that carrying out the calculation in \eqref{2.11} for the sum $S_{W,\bb}(N,q)$, the only difference is that the indicator function $\1_{D|l_1(\x)\ldots l_m(\x)}$ is replaced by $\1_{[D,q]|l_1(\x)\ldots l_m(\x)}$.
\end{proof}
$\ $\\
The sum
\eq\label{2.12.5}
S_W(f,\ga):=\sideset{}{'}\sum_{(D,W)=1} \ga_D(\vv) h_D(R)
\ee
can be asymptotically evaluated by sieve methods, see \cite{GPY}, \cite{TG}; we will sketch the approach in \cite{TG} and indicate how to modify the argument to obtain an asymptotic for the related sum
\eq\label{2.12.6}
S_{W,q}(f,\ga):=\sideset{}{'}\sum_{(D,W)=1} \ga_{[D,q]}(\vv) h_D(R)
\ee
needed for the ``concentration" estimate \eqref{1.2.2}.

\begin{lemma}\label{lem2.3} (\cite{TG}, Prop. 10) Let $\ga_D(\vv)$ be a multiplicative function satisfying estimate \eqref{1.4.2}. Then one has
\eq\label{2.13.1}
S_W(f,\ga)=\,(\frac{\phi(W)}{W}\, log\,R\,)^{-m}\,\int_0^\infty (f^{(m)}(x))^2\,\frac{x^{m-1}}{(m-1)!}\,dx\ \times (1+\,o_{\om\to\infty}(1)+O_W((log\,R\,)^{-1})).\ee

Moreover if $q>\omega$ is a prime then
\begin{align}\label{2.14}
S_{W,q}(f,\ga) \,&=&\,\frac{m}{q}\,(\frac{\phi(W)}{W} log\,R\,)^{-m}\,\int_0^\infty (f^{(m)}(x)-f^{(m)}(x+\frac{log\, q}{log\, R}))^2\,\frac{x^{m-1}}{(m-1)!}\,dx\,\times\nonumber\\
&\ &\times\ (1+o_{\om\to\infty}(1)+\,O_W((log\,R\,)^{-1})).
\end{align}
\end{lemma}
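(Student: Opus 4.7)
The proof follows the Fourier-analytic realization of the Goldston--Pintz--Yildirim sieve used in \cite{TG}. After expanding $h_D(R)$ via \eqref{2.10}, I rewrite
\[S_W(f,\ga) \,=\, \sideset{}{'}\sum_{(d_1 d_2,W)=1} \mu(d_1)\mu(d_2)\,\ga_{[d_1,d_2]}(\vv)\, \fa\, \fb,\]
and similarly for $S_{W,q}(f,\ga)$ but with $\ga_{[d_1,d_2]}(\vv)$ replaced by $\ga_{[d_1,d_2,q]}(\vv)$. By \eqref{2.9} and the Chinese Remainder Theorem the map $D\mapsto\ga_D(\vv)$ is multiplicative over squarefree $D$, so inserting a Mellin contour representation of the form
\[f\!\left(\tfrac{\log d}{\log R}\right) \,=\, \frac{1}{2\pi i}\int_{\mathrm{Re}(s)=c}\widetilde f(s)\, R^{s}\, d^{-s}\,ds\qquad (c>0),\]
converts each sum into a double contour integral whose integrand is a Dirichlet series with Euler product
\[Z(s_1,s_2) \,=\, \prod_{p\nmid W}\Bigl(1-\ga_p(\vv)\bigl(p^{-s_1}+p^{-s_2}-p^{-s_1-s_2}\bigr)\Bigr).\]
In the case of $S_{W,q}$ the local factor at $p=q$ is replaced by $\ga_q(\vv)(1-q^{-s_1})(1-q^{-s_2})$; this is because, splitting each squarefree $D$ coprime to $W$ as $D=D'q^{\epsilon}$ with $(D',q)=1$ and $\epsilon\in\{0,1\}$, one always has $\ga_{[D,q]}=\ga_q\,\ga_{D'}$, so the sum over the $q$-part of $d_1$ and $d_2$ collapses to the stated product.

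Using the estimate $\ga_p(\vv)=m/p+O(p^{-2})$ from Proposition \ref{prop1.2}, each local factor at $p\nmid W$ is compared with the $p$-th Euler factor of $\zeta(1+s_1+s_2)^m/(\zeta(1+s_1)^m\zeta(1+s_2)^m)$; the $O(p^{-2})$ discrepancy forces the ratio to converge to a function $G_W(s_1,s_2)$ holomorphic in a neighborhood of the origin, with $G_W(0,0)=(W/\phi(W))^m(1+o_{\om\to\infty}(1))$, the $\om$-dependence coming entirely from the missing Euler factors at primes $p\leq\om$. Thus
\[Z(s_1,s_2) \,=\, \frac{\zeta(1+s_1+s_2)^m}{\zeta(1+s_1)^m\zeta(1+s_2)^m}\, G_W(s_1,s_2).\]

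One then evaluates the double contour integral by shifting each contour past the pole of order $m$ at $s_i=0$. After rescaling $s_i\to s_i/\log R$ and using $\zeta(1+s)=s^{-1}(1+O(s))$, a Parseval-type identity for the Mellin transform of $f^{(m)}$---valid because $f(x)=(1-x)_+^{8m}$ makes $f^{(m)}$ compactly supported and continuous---converts the residue into
\[\bigl(\tfrac{\phi(W)}{W}\log R\bigr)^{-m}\int_0^\infty f^{(m)}(x)^2\,\frac{x^{m-1}}{(m-1)!}\,dx,\]
establishing \eqref{2.13.1}. For $S_{W,q}$ the modified $q$-factor, after the same rescaling, inserts into the integrand an extra factor of $\ga_q(\vv)(1-q^{-s_1/\log R})(1-q^{-s_2/\log R})$; since Mellin conjugation turns multiplication by $q^{-s/\log R}$ into translation by $\log q/\log R$, the same Parseval step then replaces $f^{(m)}(x)$ by $f^{(m)}(x)-f^{(m)}(x+\tfrac{\log q}{\log R})$ in the integrand and pulls $\ga_q(\vv)\approx m/q$ out as a prefactor, giving \eqref{2.14}; uniformity in $q>\om$ follows automatically from the explicit $O(p^{-2})$ estimate of Proposition \ref{prop1.2}, and the residual contour-shift errors are absorbed in the $O_W((\log R)^{-1})$ term.

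The hard part of this program is the precise Parseval/residue identity that converts the contour integral of the zeta-ratio into the clean squared integral of $f^{(m)}$. This crucially uses the specific choice $f(x)=(1-x)_+^{8m}$, both to justify the Mellin inversion (via smoothness of $f^{(m)}$) and to ensure the contour shifts pick up no additional poles (via the support of $f^{(m)}$ in $[0,1]$). The variant \eqref{2.14} is then a direct corollary of the same calculation, since the $q$-modification only alters a single local factor and the effect propagates through the Mellin/Parseval translation dictionary in an essentially formal way.
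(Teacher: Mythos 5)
Your proposal has the same skeleton as the paper's argument, and the essential algebraic manipulations agree: you expand $h_D(R)$ via an integral representation of $f(\log d/\log R)$, exploit multiplicativity of $\ga_D$ to form an Euler product, recognize the product as a $\zeta^m$-ratio up to a bounded correction using $\ga_p=m/p+O(p^{-2})$, and extract the $q$-modification as a single altered local factor. The identity $\ga_{[D,q]}=\ga_q\ga_{D'}$ and the resulting factor $\ga_q(1-q^{-s_1})(1-q^{-s_2})$ is correct (it matches the paper's $\ga_q(1+g_q)/(1+g_q\ga_q)$ after discarding an $o_{\om\to\infty}(1)$ term; strictly you should record the denominator $1+g_q\ga_q=1+O(1/q)$ when you re-normalize against the full Euler product).

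The one substantive divergence is in how you propose to handle the analytic part. The paper deliberately avoids contour shifting: it parametrizes the contour as $s=(1+it)/\log R$ with $t$ real, truncates $|t_1|,|t_2|\le\sqrt{\log R}$ using the rapid decay $|\hf(t)|\ls(1+|t|)^{-8m-2}$ (which is where the choice $f=(1-x)_+^{8m}$ enters), and then simply Taylor-expands $\zeta_W(1+s)\approx\frac{1}{s}\frac{\phi(W)}{W}$ near $s=0$. You instead propose a Mellin integral at a fixed abscissa followed by a double-contour shift past an order-$m$ pole, which is the \cite{GPY} style the authors explicitly say they are not following; it works, but it forces you to justify the contour move and compute a double-residue, which is exactly the technical bulk you then defer to "the hard part." The paper sidesteps this entirely via two Fubini-type identities, namely
$(2+it_1+it_2)^{-m}=\int_0^\infty e^{-x(2+it_1+it_2)}\,\frac{x^{m-1}}{(m-1)!}\,dx$
and
$f^{(m)}(x)=(-1)^m\int_\R e^{-x(1+it)}(1+it)^m\,\hf(t)\,dt$,
which together convert $\int\!\int\frac{(1+it_1)^m(1+it_2)^m}{(2+it_1+it_2)^m}\hf(t_1)\hf(t_2)\,dt_1dt_2$ directly into $\int_0^\infty f^{(m)}(x)^2\,\frac{x^{m-1}}{(m-1)!}\,dx$, no residue computation required. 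The same identities handle the $q$-case: the extra factor $(1-e^{-s_1\log q})(1-e^{-s_2\log q})$ shifts $e^{-x(1+it)}$ to $e^{-x(1+it)}-e^{-(x+\frac{\log q}{\log R})(1+it)}$, giving the $f^{(m)}(x)-f^{(m)}(x+\frac{\log q}{\log R})$ term. You correctly anticipated this translation mechanism but left the central identity unproven. So: right route, right structural observations, but the key analytic step you label as hard is exactly the step that needs to be written down, and the paper shows it can be done without contour shifting at all.
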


\begin{proof} We specify $f(x)=(1-|x|)_+^{8m}$,  the function $e^x f(x)$ is compactly supported and $f^{(8m)}(x)=c_m (1-|x|)_+$ hence its Fourier transform, denoted by $\hf(t)$, satisfies $|\hf(t)|\ls (1+|t|)^{-(8m+2)}$. Substituting the Fourier inversion formula
\[e^x f(x)=\int_\R e^{-itx} \hf(t)\,dt\]
into \eqref{2.10} one obtains
\eq\label{2.14.1}
h_D(R)=\int_\R\int_\R \sum_{[d_1,d_2]=D} \mu(d_1)\mu_(d_2) d_1^{-\frac{1+it_1}{log\,R}} d_2^{-\frac{1+it_2}{log\,R}} \hf(t_1)\hf(t_2)\,dt_1 dt_2=:\int_\R\int_\R g_D(t_1,t_2)\,dt_1 dt_2.
\ee
The function $g_D(t_1,t_2)H_D(R)$ is multiplicative in $D$ hence
\[
\sideset{}{'}\sum_{(D,W)=1} g_D(t_1,t_2) \ga_D(\vv)=\prod_{p>\om} (1+g_p(t_1,t_2)) \ga_p(\vv),
\]
which gives

\eq\label{2.15}
S_{W}(f,\ga) = \int_\R\int_\R \prod_{p>\om} \left(1-\frac{\ga_p(\vv)}{p^\frac{1+it_1}{log\,R}}-\frac{\ga_p(\vv)}{p^\frac{1+it_2}{log\,R}}+\frac{\ga_p(\vv)}
{p^\frac{2+it_1+it_2}{log\,R}}\right)\ \hf(t_1)\hf(t_2)\,dt_1dt_2.
\ee
By Proposition \ref{prop1.2}
\[log\,\left|1-\frac{\ga_p(\vv)}{p^\frac{1+it_1}{log\,R}}-\frac{\ga_p(\vv)}{p^\frac{1+it_2}{log\,R}}+\frac{\ga_p(\vv)}
{p^\frac{2+it_1+it_2}{log\,R}}\right|\, \leq \, 3m\,p^{-1-\frac{1}{log\,R}}+O(p^{-2}).\]
By the well-know asymptotic
\[\sum_p p^{-1-\frac{1}{log\,R}} = log\,log\,R +O(1),\]
we see that the integrand in \eqref{2.15} is bounded by $C (log\,R)^{3m}(1+|t_1|)^{-8m-2} (1+|t_2|)^{-8m-2}$. Integrating over the range $|t_1, |t_2|> \sqrt{log\,R}$ gives

\[S_W(f,\ga)\,=\, \int_{|t_1|,|t_2|\leq \sqrt{log\,R}} \prod_{p>\om} (1-\frac{\ga_p(\vv)}{p^\frac{1+it_1}{log\,R}}-\frac{\ga_p(\vv)}{p^\frac{1+it_2}{log\,R}}+\frac{\ga_p(\vv)}
{p^\frac{2+it_1+it_2}{log\,R}})\ \hf(t_1)\hf(t_2)\,dt_1dt_2 + O(log^{-m-1} R).\]

Let, for $Re(s)>1$,
\[\zeta_W(s):=\prod_{p>\om} (1-\frac{1}{p^s})^{-1}=\zeta(s) \prod_{p\leq \om}(1-\frac{1}{p^s}).\]
From \eqref{1.4.2} it is easy to see that
\eq\label{2.20}
\prod_{p>\om} (1-\frac{\ga_p(\vv)}{p^\frac{1+it_1}{log\,R}}-\frac{\ga_p(\vv)}{p^\frac{1+it_2}{log\,R}}+\frac{\ga_p(\vv)}
{p^\frac{2+it_1+it_2}{log\,R}})\,=\,\frac{\zeta_W^{m}(1+s_1+s_2)}{\zeta_W^{m}(1+s_1)\zeta_W^{m}(1+s_2)}\,
(1+o_{\om\to\infty}(1)),
\ee
with $s_1=1+\frac{1+it_1}{log\,R}$, $s_2=1+\frac{1+it_2}{log\,R}$. On the range $|t_1|, |t_2|\leq \sqrt{log\,R}$ one has that
\[\prod_{p\leq\om} (1-p^{-s})=\prod_{p\leq\om} (1-p^{-1})(1+o_{\om\to\infty}(1))=\frac{\phi(W)}{W} (1+o_{\om\to\infty}(1)).\]
Thus from the basic property $\zeta(s)=(s-1)^{-1}+O(1)$ for $s$ near 1, it follows
\[\zeta_W(s)=\frac{1}{s-1}\frac{\phi(W)}{W}\,(1+o_{\om\to\infty}(1)).\]
Substituting this into \eqref{2.20} gives

\begin{multline*} S_W(f,\ga)\,=\,(\frac{\phi(W)}{W}\,log\,R)^{-m} \int_{|t_1|,|t_2|\leq \sqrt{log\,R}} \frac{(1+it_1)^m (1+it_2)^m}
{(2+it_1+it_2)^m}\,\hf(t_1)\hf(t_2)\,dt_1dt_2\,\times\\ \times\,(1+o_{\om\to\infty}(1)).
\end{multline*}
Note that by the quick decrease of $\hf(t)$ the integration in $t_1$ and $t_2$ can be extended to $\R$ by making an error of $O((log\,R)^{-3m-1})$.
Finally, using the identities
\eq\label{2.21}(2+it_1+it_2)^{-m}=\int_0^\infty e^{-x(2+it_1+t_2)}\,\frac{x^{(m-1)}}{(m-1)!}\,dx,\ee
and
\eq\label{2.22}f^{(m)}(x)=(-1)^m \int_\R e^{-x(1+it)} (1+it)^m \hf(t)\,dt\ee
obtained by integration by parts, one may write
\[S_W(f,\ga)\,=\,(\frac{\phi(W)}{W}\,log\,R)^{-m} \int_0^\infty (f^{(m)}(x))^2\,\frac{x^{(m-1)}}{(m-1)!}\,dx\, \times (1+o_{\om\to\infty}(1)+O_W((log\,R)^{-1}).\]
This shows \eqref{2.13}.
\\\\
To show \eqref{2.14} we modify the above argument as follows. We have
\eq\label{2.24}
S_{W,q}(f,\ga)\,=\,\int_\R\int_\R \sideset{}{'}\sum_{(D,W)=1} g_D(t_1,t_2)\,\ga_{[D,q]}(\vv)\,\hf(t_1)\hf(t_2)\,dt_1 dt_2.
\ee

For the inner sum we separate the cases $q\nmid D$ and $q|D$ in which case we change variables $D:=qD$, this gives
\[
\sideset{}{'}\sum_{(D,W)=1} g_D(t_1,t_2)\,\ga_{[D,q]}(\vv)\,=\, \ga_q(\vv)(1+g_q(t_1,t_2))\,\sideset{}{'}\sum_{\substack{(D,W)=1\\q\nmid D}} g_D(t_1,t_2)\,\ga_{[D,q]}(\vv)\,=\]
\[=\ \frac{\ga_q(\vv)(1+g_q(t_1,t_2))}{1+g_q(t_1,t_2)\ga_q(\vv)}\,\prod_{\substack{p>\om\\p\neq q}} (1-\frac{\ga_p(\vv)}{p^\frac{1+it_1}{log\,R}}-\frac{\ga_p(\vv)}{p^\frac{1+it_2}{log\,R}}+\frac{\ga_p(\vv)}
{p^\frac{2+it_1+it_2}{log\,R}}).
\]

Note that this differs from the integrand in\eqref{2.20} only by that additional factor
\eq\label{2.25}
\frac{\ga_q(\vv)(1+g_q(t_1,t_2))}{1+g_q(t_1,t_2)\ga_q(\vv)}\,=\,=\frac{m}{q} (1-q^{-\frac{1+it_1}{log\,R}})
(1-q^{-\frac{1+it_2}{log\,R}})\,(1+o_{\om\to\infty}(1)),
\ee
as by our assumption $q>\om$ hence $\ga_q(\vv)=\frac{m}{q}(1+o_{\om\to\infty}(1)$. Thus we have the analogue of \eqref{2.20}

\eq\label{2.26}
S_{W,q}(f,\ga)\,=\,\frac{m}{q}\,(\frac{\phi(W)}{W}\,log\,R)^{-m} \int_\R\int_\R \frac{(1+it_1)^m (1+it_2)^m}{(2+it_1+it_2)^m}\ \times
\ee
\[\,(1-e^{-\frac{(1+it_1)log\,q}{log\,R}})(1-e^{-\frac{(1+it_2)log\,q}{log\,R}})\,
\hf(t_1)\hf(t_2)\,dt_1dt_2\, \times (1+o_{\om\to\infty}(1)+O_W((log\,R)^{-1})
\]
\\
Finally, using \eqref{2.21} and \eqref{2.22} the one may rewrite the integral in \eqref{2.26} as

\eq\label{2.27}
\int_0^\infty \left(\int_R (e^{-x(1+it)}-e^{-(x+\frac{log\,q}{log\,R})(1+it)})\, (1+it)^m\,\hf(t)\,dt\right)^2 \frac{x^{m-1}}{(m-1)!}\ dx\ee
\[=\,\int_0^\infty \left(\int_R f^{(m)}(x) - f^{(m)}(x+\frac{log\,q}{log\,R})\right)^2\, \frac{x^{m-1}}{(m-1)!}\ dx.
\]
\end{proof}
\medskip

We turn to the proof of our main results now. First we prove Theorem \ref{thm1.2} which follows from Lemma 2.2. and Lemma 2.3 by routine calculation using the properties of the local factors $\si_p(p,\bb,\vv)$.

\bigskip

\emph{\underline{Proof of Theorem \ref{thm1.2}.}} Let $\eta\leq \eta(r,k)$, with $\eta(r,k)$ be as specified in \eqref{1.2.1}. By \eqref{2.6} and \eqref{2.13.1}, one has

\eq\label{2.38}
\sum_{\substack{\x\in[N]^n\\ (\LL(\x),W)=1,\ \F(\x)=\vv}} \La_R^2(l_1(\x)\cdots l_m(\x))\ =\
\sum_{\substack{\bb\in\Z_W^n\\ (\LL(\bb),W)=1}} S_{W,\bb}(N)\ =
\ee
\begin{multline*}\quad\quad\quad = c_m(f)\,N^{n-dr} J(N^{-d}\vv)\ \frac{W^{m-n}}{\phi(W)^m\,(log\,R)^m}\ \sum_{\substack{\bb\in\Z_W^n\\ (\LL(\bb),W)=1}}\ \Si_{W,\bb}(\vv)\,\times\\ \times\,(1+o_{\om\to\infty}(1))\  +\ O_W((log\,R)^{-1}).
\end{multline*}

By Lemma \ref{lem2.1} and the Chinese Remainder Theorem,
\eq\label{2.39}
\frac{W^{m-n}}{\phi(W)^m}\, \sum_{\substack{\bb\in\Z_W^n\\ (\LL(\bb),W)=1}}\Si_{W,\bb}(\vv)=\prod_{p|W} \frac{p^{m-n}}{(p-1)^m}\,\sum_{\substack{\bb\in\Z_p^n\\ (\LL(\bb),p)=1}}\si_p(p,\bb;\vv)\,)\ \prod_{p\nmid W}\si_p(\vv).
\ee
Note that $\si_p(\vv)=1+O(p^{-2})$ and hence $\prod_{p\nmid W} \si_p(\vv)=1+o_{\om\to\infty}(1)$. For a fixed $l\in \N$ and a prime $p\leq \om$, writing $\y:=p\,\x+\bb$ one has by \eqref{1.3.4}
\eq\label{2.40}
\frac{p^{m-n}}{(p-1)^m}\,p^{-l(n-r)}\sum_{(\LL(\bb),p)=1} |\{\x\in\Z_{p^l}^n;\ \F(p\x+\bb)\equiv\vv\ (mod\ p^l\}|
\ee
\[= \frac{p^m}{(p-1)^m}\,p^{-l(n-r)} |\{\y\in\Z_{p^l}^n;\ (\LL(\y),p)=1,\ \F(\y)\equiv\vv\ (mod\ p^l)\}|.\]
\\
Taking the limit $l\to \infty$, and recalling definitions \eqref{1.3.4} and \eqref{0.1}, we get
\[\frac{p^m}{(p-1)^m}\, \sum_{(\LL(\bb),p)=1} \si_p(p,\bb;\vv) = \si_p^*(\vv).\]
and then by \eqref{2.39}
\[\frac{W^{m-n}}{\phi(W)^m}\,\sum_{\substack{(\bb,W)=1\\ \F(\bb)\equiv \vv\ (mod\ W)}} \Si_{W,\bb}(\vv) = \prod_p \si_p^*(\vv)\ (1+o_{\om\to\infty}(1)).\]
This proves \eqref{1.2.1}.
\\\\
To prove \eqref{1.2.2} note that to estimate a sum over $\x\in [N]^n$ for which $\LL(\x)\notin (\PP^\eps(N))^m$ under the restriction $(\LL(\x),W)=1$, one needs to sum only over those $\x=(x_1,\ldots,x_n)$ for which $q|l_1(\x)\ldots l_m(\x)$ for some prime $\om<q\leq N^\eps$. Thus, recalling the definition of the sums $S_{W,\bb}(N,q)$ given in \eqref{2.5} we have that

\eq
\sum_{\substack{\LL(\x)\notin (\PP^\eps(N))^m\\(\x,W)=1,\,\F(\x)=\vv}} \La_R^2(l_1(\x)\cdots l_m(\x))\phi_N(\x)\,\leq\, \sum_{\om<q\leq N^\eps} S_{W,\bb}(N,q).
\ee
Let us make the simple observation that $|f^{(m)}(x)-f^{(m)}(x+\tau)|\leq \tau\,|f^{(m+1)}(x)|$ for $0\leq x,\tau\leq 1$ for our choice $f(x)=(1-|x|)_+^{8m}$.
Then by estimates \eqref{2.7} and \eqref{2.14}

$$
\sum_{\om <q\leq N^\eps} S_{W,\bb}(N,q)\leq \frac{m}{q}\left(\frac{log\,q}{log\,R}\right)^2 c_{m+1}(f)\,N^{n-rk}(log\,R)^{-m}\Si^*(N,\vv)(1+o_{\om\to\infty}(1))+O((log\,R)^{-2m}).
$$
\\
Write $\eps':=\eps/\eta$, so that $N^\eps = R^{\eps'}$ with $R=N^\eta$. The sum over the primes $\om<q\leq R^{\eps'}$ can be estimated by a dyadic decomposition using the Prime Number Theorem

$$
\sum_{\om< q\leq R^{\eps'}} q^{-1}\,(log\,q)^2\,=\,\sum_{\om \leq 2^j\leq\,R}\  \sum_{2^{j-1}<q\leq 2^j} q^{-1}\,(log\,q)^2\leq\,(2+o_{\om\to\infty}(1))\,\sum_{j\leq \eps' log_2\,R}\, j\,\leq 2\,(\eps')^2.
$$
This implies \eqref{1.2.2}.
\quad\quad\quad$\Box$
\bigskip

\underline{\emph{Proof of Theorem} \eqref{thm1.1}.} We need to choose $\eps>0$
to ensure that the expression in \eqref{1.2.2} is essentially less then the one in \eqref{1.2.1}. For that one needs to compare the quantities $c'_{m+1}(f)$ and $c_m(f)$ defined in Theorem \ref{thm1.2}. For our choice $f(x)=(1-|x|)_+^{8m}$ we have that $f^{(m)}(x)=\al_m (1-|x|)_+^{7m}$ while $f^{(m+1)}(x)=7m\al_m (1-|x|)_+^{7m}$ (with $\al_m=(8m)!/(7m)!$) for $0<x<1$, thus by the beta function identity
\[\int_0^1 (1-x)^a x^b\,dx=\frac{a!\,b!}{(a+b+1)!}\]
it is easy to see that $c'_{m+1}(f)< 32m^2\,c_m(f)$. Thus if $64m^3\, (\eps/\eta)^2 \leq 1/2$ then for $N$ and $\om=\om (\F,\LL)$ sufficiently large,

\eq\label{2.40}
\sum_{\substack{\LL(\x)\in (\PP^\eps(N))^m\\ \F(\x)=\vv}}
\La_R^2(\l_1(\x)\cdots l_m(\x))\ \phi_N(\x)
\geq\,c_m\,N^{n-kr}\,(log\,R)^{-m} \,\Si^*(N,\vv)\,
\ee
for some positive constant $c_m=c_m(f)>0$.
\\\\
Finally note that if $\x\in [N]^n$ and $\LL(\x)\in (\PP^\eps(N))^m$ then $l_i(\x)$ can have at most $1/\eps$ prime divisors, for every $1\leq i\leq m$, hence $\La_R(l_1(\x)\cdots l_m(\x))\leq 2^{m/\eps}$. Thus by \eqref{2.40} the number of solutions $\x\in [N]^n$ to $\F(\x)=\vv$ for which $\LL(\x)\in (\PP^\eps(N))^m$ is at least
\[
c(m,k,r)\,N^{n-kr}\,(log\,N)^{-m} \,\Si^*(N,\vv),
\]
with $c(m,k,r):= c_m\, 2^{-2m/\eps}$ for some $\eps=\eps(m,k,r)>0$. In fact one may choose $\eps:=(16m)^{-3/2}\eta(r,k)$ with $\eta(r,k)=(8r^2(r+1)(r+2)k(k+1))^{-1}$ given in \eqref{1.2.1}.
This proves Theorem \ref{thm1.1}
\quad\quad$\Box$

\bigskip

\section{The local factors.}  In this section we study the Euler factors $\ga_p(\vv)$ and prove the asymptotic formula \eqref{1.4.2}. Recall

\[\si_p (p,\s,\vv)=\lim_{l\to\infty} \si_p^l(p,\s,\vv),\ \ \ \ \text{where}\]
\[\si_p^l(p,\s,\vv)= p^{-l(n-r)}\,|\{\x\in\Z_{p^l}^n;\ \F(p\,\x+\s)\equiv 0\ (mod\ p^l)\}|.\]
\\
Note that this factor is non-zero only if $\F(\s)\equiv \vv\ (mod\ p)$. We call a point $\s\in\Z_p^n$  \emph{non-singular} if the Jacobian $Jac_\F (\s)$ has full rank $(=r)$ over the finite field $\Z_p$. In this case it is easy to calculate factors $\si_p^l(p,\s,\vv)$ explicitly.

\begin{lemma}\label{3.2} Let $\s\in\Z_p^n$ be a non-singular solution to the equation $\F(\s)\equiv \vv\ (mod\ p)$. Then
\eq \si_p^l(p,\s,\vv)=p^r.\ee
\end{lemma}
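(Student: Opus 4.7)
The plan is to use Taylor expansion around $\s$ combined with a multivariate Hensel's lemma, the key point being that the change of variables $\x \mapsto p\x + \s$ effectively strips off one factor of $p$ and reduces the counting problem to one governed by the Jacobian.

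First I would expand: for any $\x \in \Z_{p^l}^n$, write $\F(p\x + \s) = \F(\s) + p\,Jac_\F(\s)\,\x + p^2 Q(\x)$, where $Q$ is an integral vector-valued polynomial in $\x$ (the higher Taylor terms all carry a factor of $p^2$ because they involve two or more derivatives evaluated with the shift $p\x$). Since $\F(\s) \equiv \vv \pmod p$ by hypothesis, write $\F(\s) = \vv + p\baa$ for some $\baa \in \Z^r$. Substituting, the congruence $\F(p\x + \s) \equiv \vv \pmod{p^l}$ becomes
\[ p\,\baa + p\,Jac_\F(\s)\,\x + p^2 Q(\x) \equiv 0 \pmod{p^l}, \]
which, after dividing through by $p$, is equivalent to
\[ \baa + Jac_\F(\s)\,\x + p\,Q(\x) \equiv 0 \pmod{p^{l-1}}. \tag{$\ast$} \]

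The next observation is that condition $(\ast)$ depends only on $\x$ modulo $p^{l-1}$ (equivalently, $\F(p\x + \s) \pmod{p^l}$ is invariant under $\x \mapsto \x + p^{l-1}\y$, since the polynomial $\F$ maps $p^l \y$ into $p^l \Z$). Hence the number $N(l)$ of $\x \in \Z_{p^l}^n$ satisfying $(\ast)$ equals $p^n \cdot M(l-1)$, where $M(l-1)$ denotes the number of $\x \in \Z_{p^{l-1}}^n$ solving $(\ast)$.

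Now I would count $M(l-1)$ by a multivariate Hensel-lifting argument. By non-singularity, $Jac_\F(\s)$ has rank $r$ mod $p$, so after a permutation of coordinates we may assume its leftmost $r \times r$ minor $A$ is invertible mod $p$. Split $\x = (\y,\z)$ with $\y \in \Z_{p^{l-1}}^r$ and $\z \in \Z_{p^{l-1}}^{n-r}$; for each fixed $\z$, relation $(\ast)$ becomes a system $A\y = b(\y,\z) \pmod{p^{l-1}}$, where the right-hand side depends on $\y$ only through the $p$-divisible correction $p\,Q$. Standard Hensel iteration (or direct induction on $l$) then shows that for each $\z$ there is a unique $\y \in \Z_{p^{l-1}}^r$ solving the system, giving $M(l-1) = p^{(l-1)(n-r)}$. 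Substituting back, $N(l) = p^n \cdot p^{(l-1)(n-r)} = p^{l(n-r)+r}$, and therefore $\si_p^l(p,\s,\vv) = p^{-l(n-r)} N(l) = p^r$, as claimed.

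The only nontrivial ingredient is the Hensel step; everything else is bookkeeping with the Taylor expansion. Since $r$ may be greater than $1$ and the correction term $p\,Q(\y,\z)$ mixes $\y$ with $\z$, I expect the mildest subtlety to be verifying that the contraction-mapping/successive-approximation argument really produces a unique $\y$ for each $\z$ at each level — but this is a classical consequence of the invertibility of $A$ modulo $p$ and the fact that the perturbation is divisible by $p$.
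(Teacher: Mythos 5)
Your proof is correct, and it takes a different route from the one in the paper. The paper argues by induction on $l$: it computes $\si_p^1$ and $\si_p^2$ directly, and then for $l\geq 3$ it decomposes $\y\in\Z_{p^{l-1}}^n$ as $\y=p^{l-2}\uu+\z$ and uses the linearization at the \emph{top} $p$-adic digit to show $\si_p^l(p,\s,\vv)=\si_p^{l-1}(p,\s,\vv)$, then telescopes down to the base case. You instead fix $l$, divide the congruence through by $p$ once (using the Taylor expansion to strip the \emph{bottom} factor of $p$ and pick up the $p^n$ factor from the free top digit of $\x$), and then solve the resulting mod-$p^{l-1}$ system by a multivariate Hensel/contraction argument after splitting off an invertible $r\times r$ block of $Jac_\F(\s)$. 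Both rest on the same two facts --- that $Jac_\F(\s)$ has full rank $r$ over $\Z_p$, and that the higher-order Taylor terms carry an extra factor of $p$ --- but your version is a one-pass count with Hensel doing the heavy lifting, whereas the paper's version repackages the same mechanism as a recursion in $l$ and never needs to invoke (or re-prove) the uniqueness statement of Hensel's lemma explicitly. Your one minor elision is the claim that for each fixed $\z$ there is a unique $\y$; this is exactly the injectivity of $\y\mapsto A\y+pQ(\y,\z)$ on $(\Z/p^{l-1}\Z)^r$, which follows since a difference $\y-\y'$ exactly divisible by $p^j$ forces $A(\y-\y')$ to be exactly divisible by $p^j$ while the $pQ$-term contributes at worst $p^{j+1}$; you gesture at this (``contraction mapping'') but it is worth saying to close the argument.
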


\begin{proof} We proceed by induction on $l$. For $l=1$ we have $\F(p\x+\s)\equiv \F(\s)\equiv \vv\ (mod\ p)$ for all $\x\in\Z_p^n$ thus $\si_p^1(p,\s,\vv)=p^r$. Let $l=2$. We'd like to count $\x\in\Z_{p^2}^n$ satisfying
\[\F(p\x+\s)\equiv \F(\s)+p\ Jac_\F(\s)\cdot\x\ \equiv \vv\ \ (mod\ p^2).\]
Since $\F(\s)-\vv=p\uu$ this reduces to
\[Jac_\F(\s)\cdot\x\equiv -\uu\ \ (mod\ p).\]
By assumption the map $Jac_\F(\s):\Z_p^n\to\Z_p^n$ has full rank, thus the above equation has $p^{n-r}$ solution in $\Z_p^n$ and hence $p^{2n-r}$ solutions $\x\in\Z_{p^2}^n$. It follows that $\si_p^2(p,\s,\vv)=p^r$.
For $l\geq 3$ we show that
\[\si_p^l(p,\s,\vv)=\si_p^{l-1}(p,\s,\vv).\]
Note that if $\x\equiv\y\ (mod\ p^{l-1})$ then $\F(p\x+\s)\equiv \F(p\y+\s)\ (mod\ p^l)$. For given $\y\in\Z_{p^{l-1}}^n$ write $\y=p^{l-2}\uu+\z$ with $\z\in\Z_{p^{l-2}}^n$ and $\uu\in\Z_p^n$. Then
\eq\label{3.2.1}
\F(p\y+\s)\equiv \F(p^{l-1}\uu+p\z+\s)\equiv \F(p\z+\s)+p^{l-1}Jac_\F(\s)\cdot\uu\ \ (mod\ p^l).
\ee
Thus $\F(p\y+\s)\equiv\vv\ (mod\ p^l)$ implies that
\eq\label{3.2.2}\F(p\z+\s)\equiv\ (mod\ p^{l-1}),\ee
the number such $\z\in\Z_{p^{l-2}}^n$ is $\ p^{-n}p^{(l-1)(n-r)}\si_p^{l-1}(p,\s,\vv)$. For a given $\z$ satisfying \eqref{3.2.2} write $\F(p\z+\s)=p^{l-1}\bb+\vv$, then \eqref{3.2.1} holds if and only if
\eq\label{3.2.3}
Jac_\F(\s)\cdot\uu \equiv -\bb\ \ \ \ (mod\ p).
\ee
By our assumption $Jac_\F(\s)$ has full rank $(=r)$ above $\Z_p^n$ thus the number of solutions to \eqref{3.2.3} is $p^{n-r}$. Since that decomposition $\y=p^{l-2}\uu+\z$ is unique it follows that
\[\si_p^l(p,\s,\vv)=p^n p^{-l(n-r)}p^{-n}p^{(l-1)(n-r)}p^{n-r}\si_p^{l-1}(p,\s,\vv)=\si_p^{l-1}(p,\s,\vv).\]
\end{proof}

For singular values of $\s$ we can only get upper bounds on the local factors $\si_p(p,\s,\vv)$. The case $\s=\vv=\b0$ suggests that one cannot get better estimates then $p^{kr}$.

\begin{lemma}\label{3.3} Let $\F$ be a family of $r$ integral forms of degree $k$, and assume that
\eq\label{codim}
codim\ (V_\F^*)\geq r(r+1)(k-1)2^k+1.
\ee
Then uniformly for $l\in\N$ and $\s\in\Z_p^n$ one has
\eq\label{3.3.1}
\si_p^l(p,\s,\vv)\ls p^{r^2k}.
\ee
\end{lemma}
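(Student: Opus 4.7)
My plan is to estimate $\si_p^l(p,\s,\vv)$ via Fourier inversion on $\Z_{p^l}^r$ and a $p$-adic analog of Birch's exponential-sum bound. First, I would write
\[
\si_p^l(p,\s,\vv) = p^{-l(n-r)}\,p^{-lr} \sum_{\bt \in \Z_{p^l}^r} e_{p^l}(-\bt\cdot\vv)\, U(\s,\bt),
\quad U(\s,\bt) := \sum_{\x \in \Z_{p^l}^n} e_{p^l}(\bt\cdot\F(p\x+\s)),
\]
where $e_{p^l}(a) := e^{2\pi i a/p^l}$, with the $\bt=\0$ term contributing a trivial main piece. To organize the remaining terms, I would stratify by the $p$-adic valuation of $\bt$, writing $\bt = p^{l-j}\bt'$ with $\bt' \in \Z_{p^j}^r$ primitive (at least one coordinate a unit), for $1\le j\le l$. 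Since $e_{p^l}(p^{l-j}\,\cdot)=e_{p^j}(\cdot)$, the phase depends on $\x$ only modulo $p^j$, and after absorbing the redundant $(l-j)n$ coordinates each shell collapses to a complete exponential sum $V_j(\s,\bt') := \sum_{\x\in\Z_{p^j}^n} e_{p^j}(\bt'\cdot\F(p\x+\s))$, weighted so that the shell's contribution to $\si_p^l$ has the schematic form $p^{-jn}\sum_{\bt'\,\mathrm{prim}} V_j(\s,\bt')\,e_{p^j}(-\bt'\cdot\vv)$.

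Next, I would split the analysis at $j=k$. For $j\le k$, the top-degree term $p^k\F(\x)$ vanishes modulo $p^j$, so the phase is a polynomial in $\x$ of degree at most $j-1$; the trivial bounds $|V_j|\le p^{jn}$ and $\#\{\bt'\,\mathrm{prim}\}\le p^{jr}$ give shell contribution bounded by $p^{jr}$, totaling $O(p^{kr})$. For $j>k$, the phase carries the full degree-$k$ form $\bt'\cdot\F(\x)$ with coefficient of denominator $p^{j-k}$, enabling an honest Weyl analysis. Here I would invoke the $p$-adic Birch--Weyl bound developed in \cite{CM2} (based on the original argument of \cite{Bi}): applying $k-1$ Weyl differencings in $\x$ eliminates the shift $\s$ and all lower-order perturbations, reducing to the associated multilinear form of $\F$; the assumption \eqref{codim}, which is essentially twice Birch's original rank threshold, then yields a savings of the form $|V_j(\s,\bt')|\le C_\F\, p^{jn-\theta(j-k)}$ with $\theta>r$. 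The corresponding shells sum geometrically for $j>k$ to $O(p^{kr})$.

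Combining the two ranges gives $\si_p^l(p,\s,\vv) \ls p^{kr} \le p^{r^2 k}$, uniformly in $l\ge 1$ and $\s\in\Z_p^n$. The main obstacle is the $p$-adic Weyl-differencing analysis with a shifted argument: one must verify that differencing kills both the shift $\s$ and the lower-order perturbations uniformly in $\s$, and that the codimension excess in \eqref{codim} over Birch's threshold is large enough to yield a savings strictly exceeding $r$. The doubled rank hypothesis in \eqref{codim} is precisely what supplies this margin, allowing the high-shell geometric series to converge at a rate comparable to the trivial low-shell bound.
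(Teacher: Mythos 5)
Your proposal follows essentially the same route as the paper's proof: express $\si_p^l(p,\s,\vv)$ via the finite Fourier identity \eqref{4.30}, stratify by $p$-adic level $t$ (your $j$), use the trivial bound $p^{-tn}|S_{\bb,p^t}(p,\s)|\leq 1$ together with $\#\{\text{primitive }\bb\}\leq p^{tr}$ for the low shells, and invoke the shifted Gauss-sum bound of Lemma~\ref{lemma4.4} for the high shells, where the codimension hypothesis makes the savings exponent exceed $r$ and the series converge. The one genuine difference is your choice of crossover. The paper takes $\eps=1/r$ in Lemma~\ref{lemma4.4}, which forces the threshold $t>rk$ (since the Lemma requires $d<q^{\eps/k}$, i.e.\ $t>k/\eps$) and yields the bound $\ls\sum_{t\leq rk}p^{tr}\ls p^{r^2k}$; you take $\eps$ close to $1$, getting threshold $j>k$ and the sharper bound $p^{kr}$. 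Your choice burns more of the rank hypothesis: with $\eps\to 1$ the required inequality is $K>2r^2(k-1)$ where $K=\text{codim}(V_\F^*)/2^{k-1}$, which needs essentially the full strength of \eqref{codim}, whereas the paper's $\eps=1/r$ only needs $K>r(r+1)(k-1)$, i.e.\ half of \eqref{codim}. Since only a polynomial-in-$p$ bound is used downstream in Proposition~\ref{prop1.2}, both are equally serviceable, but your remark that the factor-of-two excess in \eqref{codim} is ``precisely'' what supplies the margin is an artifact of your parametrization, not an intrinsic feature of the lemma. One small misattribution: the $p$-adic Birch--Weyl bound you invoke is not from \cite{CM2} (which treats finite fields) but is Lemma~\ref{lemma4.4} of the appendix, obtained by running Birch's Weyl differencing on the shifted sum $S_N(d,\s,\bal)$ (Lemma~\ref{4.1}), where the $k-1$ differencings do indeed eliminate the dependence on $\s$ exactly as you anticipate.
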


\begin{proof} By \eqref{4.30} we have
\[\si_p^l(p,\s,\vv)=\sum_{t=0}^l \sum_{\bb\in\Z_{p^t}^r}^\ast p^{-tn} e^{-2\pi i\frac{\bb\cdot\F(p\x+\s)}{p^t}}\,S_{\bb,p^t}(p,\s),\]
where the sum in $\bb$ are taken over $r$-tuples with at least one coordinate not divisible by $p$, and $S_{\bb,p^t}(p,\s)$ is the exponential sum defined in \eqref{4.8}.
If $t>rk$ then Lemma \ref{lemma4.4} applies with $\eps=1/r$ (and $K=codim\ (V_\F^*)/2^{k-1}$)
thus
\[\sum_{t>rk}\ \sum_{\bb\in\Z_{p^t}^r}^\ast p^{-tn} |S_{\bb,p^t}(p,\s)|\,\ls \sum_{t>rk} p^{tr} p^{-\frac{tK}{(r+1)(k-1)}+\tau}\,\ls \sum_{t>rk} p^{-t\tau/2}\,\ls 1,\]
if $\tau=\tau(r,k)>0$ is chosen sufficiently small. Indeed, by \eqref{codim} we have $\frac{K}{(r+1)(k-1)}-r=\tau_0(r,k)>0$ and then \eqref{3.3.1} then follows from the trivial estimate $p^{-mn}|S_{\bb,p^m}(p,\s)|\leq 1$.
\end{proof}

\bigskip

\emph{Proof of Proposition \ref{prop1.2}.} Since $\si_p^{-1}(\vv)=1+O(p^{-2})$ for sufficiently large primes $p>\om$, it is enough to show that \eqref{1.4.2} holds for

\[\si_p(\vv)\ga_p(\vv):=p^{-n}\sum_{\F(\s)\equiv \vv\ (mod\ p)} \1_{p|l_1(\s)\cdots l_m(\s)} \si_p(p,\s,\vv)\]
\[= p^{-n}\sum_{\substack{\F(\s)\equiv 0\ (mod\ p)\\ \s\ non-singular}} \1_{p|l_1(\s)\cdots l_m(\s)} \si_p(p,\s,\vv)+ p^{-n}\sum_{\substack{\F(\s)\equiv 0\ (mod\ p)\\ \s\ singular}} \1_{p|l_1(\s)\cdots l_m(\s)} \si_p(p,\s,\vv)\]
\[=p^{-n+r}(\sum_{\F(\s)\equiv 0} \1_{p|l_1(\s)\cdots l_m(\s)} -\sum_{\substack{\F(\s)\equiv 0\\ \s\ singular}} \1_{p|l_1(\s)\cdots l_m(\s)}) +\ p^{-n}\sum_{\substack{\F(\s)\equiv 0\\ \s\ singular}} \1_{p|l_1(\s)\cdots l_m(\s)} \si_p(p,\s,\vv)\]
\[=: \ga_p^1(\vv) -\ga_p^2(\vv) +\ga_p^3(\vv).\]
\\
Let $V^*_\F(p)$ denote the locus of singular points $\s\in\Z_p^n$ of the $(mod\ p)$-reduced variety\\ $V_\F (p):=\{\F(\s)=\vv\}$. It is well-known fact in arithmetic geometry, see \cite{Shm}, that
\[codim\,(V^*_\F(p))=codim\,(V^*_\F),\]
for all but finitely many primes $p$, i.e. that codimension of the singular variety does not change when the equations defining the variety are considered $mod\ p$. Also, the number of points over $\Z_p$ on a homogeneous algebraic set $V$ is bounded by its degree times $p^{\,dim\,V}$, see \cite{GL} Prop. 12.1, hence
$|V^*_\F(p)|\ls p^{n-codim\,(V_\F^*)}$, where the implicit constant may depend on $n,k$ and $r$.
Thus for sufficiently large primes $p\geq \om$ we may apply Lemma \ref{3.2} which gives for $i=2,3$

\[|\ga_p^i(\vv)|\,\ls\, p^{-n+r^2 k} p^{n-codim\,(V_\F^*)}\, \ls\,  p^{r^2k-r(r+1)(k-1)2^{k-1}-1}\,\ls \,p^{-2}.\]
\\
For $1\leq i\leq m$ let define the subspace $M_i:=\{\s\in\Z_p^n;\ l_i(\s)=0\}$.
By the inclusion-exclusion principle we have that
\eq\label{3.8}
p^{-n+r}(\sum_{1\leq i\leq n}\sum_{\s\in M_i} \1_{\F(\s)=\vv}-\sum_{1\leq i<j\leq n}\, \sum_{\s\in M_i\cap M_j}\1_{\F(\s)=\vv})\, \leq\,\ga_p^1(\vv)\,\leq\, p^{-n+r}\sum_{1\leq i\leq n} \sum_{\s\in M_i} \1_{\F(\s)=\vv}.
\ee
\\
If $\F$ is a system of $r$ forms then it is easy to see that $rank(\F|_{M_J})\geq rank(\F)-r|J|$ for any subspace $M_J$ of codimension $|J|$, see \cite{CM1}, Cor. 2. For $J\subs [1,n]$ let $M_J:=\cap_{j\in J} M_j$, then by the pairwise linear independence of the forms $l_i$ we have that $codim\,M_J=|J|$ for $|J|\leq 2$, for sufficiently large $p$. By our assumption on the rank of the system $\F$ we have that for $1\leq |J|\leq 2$, $k\geq 2$
\[
rank(\F|_{M_J})-r \geq r(r+1)(k-1)2^k-r(|J|+1)\geq 2^{k-1}.
\]
Then by Proposition 4 in \cite{CM2} applied the the system $\F$ restricted to the subspace $M_J\simeq \Z_p^{n-|J|}$ one has
\eq\label{3.9}
p^{-(n-|J|)+r}\ \sum_{\s\in M_J} \1_{\F(\s)=\vv}\,=\,1+O\left(p^{-\frac{rank(\F|_{M_J})-r}{2^{k-1}}}\right)=1+O(p^{-1})
\ee
\\
This implies that the sums in \eqref{3.8} over the subspaces $M_i\cap M_j$
contribute $O(p^{-2})$ to the expression $\ga_p^1(\vv)$, while the sums over the subspaces $M_i$ are $p^{-1}+O(p^{-2})$.
This proves the Proposition. $\ \ \ \ \ \ \ \ \ \ \Box$

\bigskip

\section{Appendix: Diophantine equations over $\Z$ and $\Z_p$.} In this section we sketch the proof of Proposition \ref{prop1.1}, which is an extension of the main result of \cite{Bi}, see Theorem 1 there. Let us note that constants $\de,\eta,\eps$ etc. calculated here are different than the constants used in Section 2, the constants $\eta'$ and $\de'$ appear in Proposition \eqref{prop1.1} and all other constants in Section 2 are derived form those.
\\\\
For a family of integral forms $\F=(F_1,\ldots,F_r)$ and given $d\in\N$, $\s\in\Z^r$, $\al\in\R^r$ define the exponential sum
\eq\label{4.1}
S_N(d,\s,\bal):=\sum_{\x\in\Z^n} e^{2\pi i \al\cdot \F(d\x+\s)}\phi_N(d\x+\s),
\ee
where $\phi_N$ is the indicator function of a cube $B_N$ of size $N$.

Then one has the analogue of Lemma 2.1 in \cite{Bi}

\begin{lemma}\label{4.1} Let $1\leq d<N$, $N_1:=N/d$ and let $\s\in\Z^n$. Then
\eq\label{4.2}
|N_1^{-n} S_N(d,\s,\bal)|^{2^{k-1}}\ls N_1^{-kn}\ \sum_{\h^1,\ldots,\h^{k-1}\in [-N_1,N_1]^n} \prod_{j=1}^n \min\{N_1,\|d^k\bal\,\Phi_j( \Phi_j(\h^1,\ldots,\h^{k-1})\|^{-1}\},
\ee
where for $1\leq i\leq r$ the $i$=th component of the the multi-linear form $\Phi_j$ is given by
\[\Phi_j^i(\h^1,\ldots,\h^{k-1})=k!\,\sum_{1\leq j_1,\ldots,j_{k-1}\leq n} a^i_{j_1,\ldots,j_{k-1},j}\,h^1_{j_1},\ldots,h^{k-1}_{j_{k-1}},\]
and $\|\beta\|$ denotes the distance of a real number $\beta$ to the closest integer.
\end{lemma}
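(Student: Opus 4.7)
The plan is to apply the classical Weyl differencing inequality $k-1$ times, which reduces the degree of the polynomial phase in $\x$ from $k$ to $1$; the resulting linear sum over $\x$ is then evaluated by the geometric-series bound to produce the product of $\min$-terms on the right-hand side of \eqref{4.2}. Writing $N_1 := N/d$, the substitution $\y = d\x+\s$ shows that $S_N(d,\s,\bal)$ is a sum over $\x$ in a box of side length $O(N_1)$ of $e^{2\pi i \bal\cdot G(\x)}$, where $G(\x) := \F(d\x+\s)$ is a polynomial of degree $k$ in $\x$. The crucial observation is that, although $G$ is inhomogeneous, its top-degree homogeneous part equals $d^k\,\F(\x)$, because $\F$ is homogeneous of degree $k$; the lower-order terms in $\x$ die after enough differencings.

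Consequently, if $M_\F$ denotes the symmetric $k$-linear form associated to $\F$, i.e.\ $\F(\x)=M_\F(\x,\ldots,\x)$, then the iterated forward differences satisfy
\[
\Delta_{\h^1}\cdots\Delta_{\h^{k-1}} G(\x)\,=\,d^k\,k!\,M_\F(\h^1,\ldots,\h^{k-1},\x)\,+\,C(\h^1,\ldots,\h^{k-1};d,\s),
\]
with $C$ independent of $\x$. In coordinates, the coefficient of $x_j$ in the $i$th component is precisely $\Phi_j^i(\h^1,\ldots,\h^{k-1})$ from the statement. Applying Cauchy--Schwarz/Weyl squaring once to the sum over $\x$ with difference variable $\h$ doubles the exponent of $S_N$ and lowers the degree of the phase by one; iterating $k-1$ times produces shift variables $\h^1,\ldots,\h^{k-1}\in[-N_1,N_1]^n$ and gives
\[
\bigl|N_1^{-n}S_N(d,\s,\bal)\bigr|^{2^{k-1}}\,\ls\, N_1^{-kn}\sum_{\h^1,\ldots,\h^{k-1}\in[-N_1,N_1]^n}\Bigl|\sum_{\x\in B_\h} e^{2\pi i\,\bal\cdot(d^k k! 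M_\F(\h^1,\ldots,\h^{k-1},\x)+C)}\Bigr|,
\]
where $B_\h$ is a rectangular box of side length at most $N_1$ in each coordinate (the exponent $-kn$ arises from combining the initial normalisation $N_1^{-2^{k-1}n}$ with the standard boundary losses $N_1^{(2^{k-1}-k)n}$ from the intermediate differencing steps, exactly as in Birch).

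The additive constant $C$ contributes a phase of unit modulus and may be discarded; the remaining exponent is linear in $\x=(x_1,\ldots,x_n)$, so the inner sum factors as a product over the coordinates. Each one-variable geometric sum $\sum_{x_j} e^{2\pi i d^k\bal\cdot\Phi_j(\h^1,\ldots,\h^{k-1})\,x_j}$, taken over an interval of length at most $N_1$, is bounded by $\min\{N_1,\|d^k\bal\cdot\Phi_j(\h^1,\ldots,\h^{k-1})\|^{-1}\}$, which produces the product $\prod_{j=1}^n$ in \eqref{4.2}.

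The main technical point to monitor is the bookkeeping of the shifted domain $B_\h$: after $k-1$ successive shifts by the $\h^i$, the effective summation region in $\x$ depends on the shifts, but since each $\h^i$ has entries of size at most $N_1$, the resulting box is still contained in an interval of length at most $N_1$ in each coordinate, which is all that is needed for the coordinate-wise geometric-series bound. Aside from this bookkeeping, the argument follows Birch's Lemma 2.1 in \cite{Bi} verbatim, once one notes that the homogeneous top-degree structure of $\F$ survives the affine change of variables $\y = d\x+\s$ up to the scalar factor $d^k$.
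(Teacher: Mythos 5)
Your argument follows Birch's Lemma 2.1 exactly as the paper does: the same affine change of variables exposing the top-degree part $d^k\F(\x)$, the same $k-1$ Cauchy--Schwarz/Weyl differencings to linearize the phase, and the same coordinate-wise geometric-series bound at the end. The only difference is cosmetic — you carry the $\x$-independent constant $C(\h;d,\s)$ explicitly rather than absorbing it silently as the paper does — so this is the same proof.
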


\begin{proof} Write
\eq\label{4.3}\F_{d,\s}(\x):=\F(d\x+\s)=d^k \F(\x)+G_{d,\s}(\x),\ \ \ \ \ deg\,( G_{d,\s})<k.\ee
Also, $\phi_N(d\x+\s)=\phi_{N_1,\s}(\x)$ where $\phi_{N_1,\s}$ is the indicator function of the cube $B_{N_1,\s}=d^{-1}(B_N-\s)$ of size $N_1$.

Introducing the differencing operators
\[D_\h F(\x):= F(\x+\h)-\F(\x),\]
as well as their multiplicative analogues
\[\De_\h \phi (\x):=\phi(\x+\h)\bar{\phi}(\x),\]
we have by applying the Cauchy-Schwarz inequality $k-1$-times
\eq\label{4.4}|
N_1^{-n}S_N(d,\s,\bal)|^{2^{k-1}}|\ \ls\ N_1^{-kn} \sum_{\h^1,\ldots,\h^{k-1}\in \Z^n} \left|\,\sum_{\x\in\Z^n} e^{2\pi i\bal\cdot D_{\h_{k-1}}\ldots D_{\h_1} \F_{d,\s}(\x)} \De_{\h_{k-1}}\ldots \De_{\h_1} \phi_{N_1,\s} (\x)\right|.
\ee

By \eqref{4.3} and \eqref{??} we have that
\[D_{\h_{k-1}}\ldots D_{h_1} \F_{d,\s}(\x)=d^k D_{\h_{k-1}}\ldots D_{h_1} \F(\x)=d^k\,\sum_{j=1}^n x_j\,\Phi_j(\h_1,\ldots,\h_{k-1}).\]

Estimate \eqref{4.2} then follows form the fact that $|\sum_{x\in I} e^{2pi \beta x}|\leq \min\{N_1,\|\beta\|^{-1}\}$ for any $\beta\in\R$, when the summation is taken over an interval $I$ of length at most $N_1$.
\end{proof}

\bigskip

Once this is established, the rest of the arguments in \cite{Bi} carry over to our situation leading the  following \emph{minor arcs} estimate. For given $1\leq d<N$, $N_1:=N/d$ and $0<\te<1$ define the system of \emph{major arcs}

\eq\label{4.5}
\MM(\te):=\bigcup_{1\leq q\leq N_1^{(k-1)r\te}} \bigcup_{(\baa,q)=1} \MM_{\baa,q}(\te),\ \ \ \ \textit{where}\ee
\[\MM_{\baa,q}(\te):= \{\bal\in [0,1]^r;\ |\al_i-a_i/q|\leq q^{-1} N_1^{-k+(k-1)r\te},\ 1\leq i\leq r\}.
\]

\medskip

\begin{lemma}\label{lemma4.2} [\cite{Bi}, Lemma 3.3] If $\{d^k\bal\}\notin \MM(\te)$ then one has for every $\tau>0$

\eq\label{4.6}
|S_N(d,\s,\bal)|\leq C_\tau\ N_1^{n-K\te+\tau}.
\ee
\end{lemma}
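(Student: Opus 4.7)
The plan is to follow, with essentially cosmetic modifications, the Weyl--Davenport argument of Birch in the proof of Lemma 3.3 of \cite{Bi}, starting from the differencing inequality of Lemma \ref{4.1}. Suppose for contradiction that $|S_N(d,\s,\bal)| > C_\tau\, N_1^{n-K\te+\tau}$, where $K = \mathrm{codim}(V_\F^*)/2^{k-1}$ and $\tau>0$ is a small parameter to be chosen. Raising to the $2^{k-1}$-power and applying Lemma \ref{4.1} gives
\[
N_1^{-kn}\sum_{\h^1,\dots,\h^{k-1}\in[-N_1,N_1]^n}\prod_{j=1}^n \min\bigl\{N_1,\|d^k\bal\cdot\Phi_j(\h^1,\dots,\h^{k-1})\|^{-1}\bigr\}\,\gs\,N_1^{-2^{k-1}K\te+2^{k-1}\tau}.
\]
A key observation is that after the $(k-1)$-fold differencing the lower-degree polynomial $G_{d,\s}$ in \eqref{4.3} is eliminated, so the argument of the norm involves only the leading coefficient $d^k\bal$ and is in particular independent of $\s$. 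This is what forces the rational approximation to be on $d^k\bal$ rather than on $\bal$, matching precisely the hypothesis of the lemma.

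Next I would carry out a dyadic decomposition on each of the $n$ factors $\min\{N_1,\|\cdot\|^{-1}\}$, followed by Davenport's shrinking lemma (Lemma 2.3 of \cite{Bi}), to extract a box $[-H,H]^{n(k-1)}$ with $H=N_1^{1-\te_0}$ and $\te_0$ proportional to $\te$, inside which the number of integer tuples $(\h^1,\dots,\h^{k-1})$ simultaneously satisfying $\|d^k\bal\cdot\Phi_j(\h^1,\dots,\h^{k-1})\|\le H^{-k+O(\te_0)}$ for all $1\le j\le n$ is $\gs H^{n(k-1)-2^{k-1}K\te-O(\tau)}$. A second application of the shrinking lemma converts this into the same lower bound on the number of integer tuples in a comparable box for which $\Phi_j(\h^1,\dots,\h^{k-1})=0$ holds \emph{exactly} for all $j$, unless $d^k\bal$ already admits a rational approximation $\baa/q$ with $q\le N_1^{(k-1)r\te}$ and $|d^k\al_i-a_i/q|\le q^{-1}N_1^{-k+(k-1)r\te}$; in that latter case $\{d^k\bal\}\in\MM(\te)$ and we are done.

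It then remains to rule out the first alternative. By the multilinearity of the forms $\Phi_j$ and the fact that they arise (up to combinatorial factors) from the $(k-1)$-fold symmetric derivatives of the $F_i$, the common zero locus $Z:=\{(\h^1,\dots,\h^{k-1})\in(\C^n)^{k-1}:\Phi_j(\h^1,\dots,\h^{k-1})=0\ \forall j\}$ is precisely the set on which a certain Jacobian-type matrix built from $\F$ drops rank; a standard argument (\cite{Bi}, \S3) then gives $\mathrm{codim}_{(\C^n)^{k-1}} Z \geq \mathrm{codim}(V_\F^*)=2^{k-1}K$. Combined with the bound that a homogeneous algebraic set of codimension $c$ in affine $m$-space has $\ls H^{m-c+\tau}$ integer points in $[-H,H]^m$ (the same elementary estimate invoked in the proof of Proposition \ref{prop1.2}), this yields an upper bound $\ls H^{n(k-1)-2^{k-1}K+\tau}$ on the integer zero count. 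Comparing with the lower bound $\gs H^{n(k-1)-2^{k-1}K\te-O(\tau)}$ is a contradiction provided $\te<1$ and $\tau$ is chosen small in terms of $1-\te$; so the first alternative cannot occur, and $\{d^k\bal\}\in\MM(\te)$, contradicting the standing hypothesis.

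The main obstacle is conceptual rather than technical: it is the geometric input relating the vanishing locus of the multilinear forms $\Phi_j$ to the singular variety $V_\F^*$, which is exactly Birch's codimension lemma and is unchanged in our setting. The passage from "many near-integer values of $d^k\bal\cdot\Phi_j(\h)$" to "many exact integer zeros of $\Phi_j$" via the shrinking lemma is classical, and the only adaptation needed beyond Birch's treatment is the bookkeeping of the substitution $\x\mapsto d\x+\s$, which merely multiplies the leading coefficient by $d^k$ and restricts the summation to a box of side $N_1=N/d$ — both entirely routine.
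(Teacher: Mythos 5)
Your proposal is correct and follows exactly the route the paper intends: the paper itself gives no proof of this lemma, merely citing \cite{Bi}, Lemma 3.3 after observing that once the differencing inequality of Lemma \ref{4.1} is in place, ``the rest of the arguments in \cite{Bi} carry over.'' Your sketch fills in precisely those carried-over steps (dyadic decomposition of the $\min$ factors, Davenport's shrinking lemma, Birch's geometric codimension lemma relating the zero locus of the multilinear forms $\Phi_j$ to $\mathrm{codim}(V_\F^*)$, and the comparison of upper and lower point counts), and you correctly identify the one genuine adaptation --- that the $(k-1)$-fold differencing annihilates $G_{d,\s}$ so that the rational approximation forced by the argument lands on $d^k\bal$ rather than on $\bal$, which is why the hypothesis is $\{d^k\bal\}\notin\MM(\te)$.
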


We need the above estimate in slightly different form, depending only on $\bal$.

\begin{lemma}\label{lemma4.3} Let $0<\te,\eps<1$ and let $0<\eta \leq \eps r(1-k^{-1})\te$. If $d\leq N^\frac{\eta}{1+\eta}$ then for $\bal\notin \MM(\te)$ one has uniformly for $\s\in \Z^n$

\eq\label{4.7}
|S_N(d,\s,\bal)|\ls_\tau\ N_1^{n-\frac{K}{1+\eps}\te +\tau}\ \ \ \ \ (\forall\ \tau>0).
\ee
\end{lemma}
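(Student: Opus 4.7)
The plan is to deduce Lemma \ref{lemma4.3} from Lemma \ref{lemma4.2} by a translation argument in frequency space. Lemma \ref{lemma4.2} controls $S_N(d,\s,\bal)$ under the hypothesis that the rescaled frequency $\{d^k\bal\}$ avoids the major arcs $\MM(\te_0)$, whereas here we need a bound under the hypothesis that $\bal$ itself avoids $\MM(\te)$. The idea is to choose $\te_0:=\te/(1+\eps)$, so that the exponent $K\te_0=K\te/(1+\eps)$ produced by Lemma \ref{lemma4.2} matches the target, and then show contrapositively that $\{d^k\bal\}\in\MM(\te_0)$ forces $\bal\in\MM(\te)$.

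Suppose accordingly that $\{d^k\bal\}\in\MM_{\baa,q}(\te_0)$ for some $q\leq N_1^{(k-1)r\te_0}$ and $\baa$ with $(\baa,q)=1$. Unwinding the definition, there exist integers $m_i$ so that
\[|d^k\al_i-(a_i+qm_i)/q|\leq q^{-1}N_1^{-k+(k-1)r\te_0},\quad 1\leq i\leq r.\]
Dividing by $d^k$ and setting $Q:=qd^k$, $A_i:=a_i+qm_i$, this reads $|\al_i-A_i/Q|\leq Q^{-1}N_1^{-k+(k-1)r\te_0}$. Reducing the vector $(A_1,\ldots,A_r,Q)$ by its common divisor $g$ produces a coprime representation $A_i'/Q'$ with $Q'\leq Q=qd^k\leq d^kN_1^{(k-1)r\te_0}$, and since $Q'\leq Q$ the approximation quality is only improved: $|\al_i-A_i'/Q'|\leq (Q')^{-1}N_1^{-k+(k-1)r\te_0}$.

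To conclude that $\bal\in\MM(\te)$ I need to verify the two defining conditions with this reduced pair. Since $\te_0<\te$, the approximation inequality immediately gives $|\al_i-A_i'/Q'|\leq (Q')^{-1}N_1^{-k+(k-1)r\te}$. For the denominator, the condition $Q'\leq N_1^{(k-1)r\te}$ reduces via $Q'\leq d^kN_1^{(k-1)r\te_0}$ to $d^k\leq N_1^{(k-1)r(\te-\te_0)}=N_1^{(k-1)r\te\eps/(1+\eps)}$. The hypothesis $d\leq N^{\eta/(1+\eta)}$ combined with $N_1=N/d\geq N^{1/(1+\eta)}$ yields $d\leq N_1^{\eta}$, so $d^k\leq N_1^{k\eta}$, and the required inequality reduces to a linear constraint $k\eta\leq(k-1)r\te\eps/(1+\eps)$, which is ensured by the hypothesis $\eta\leq \eps r(1-k^{-1})\te$.

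The main obstacle is purely parameter bookkeeping: one must verify that the reduced pair $(A_i',Q')$ simultaneously meets the size constraint of $\MM(\te)$ and its coprimality requirement, and that the rescaling by $d^k$ does not degrade the approximation enough to escape the enlarged major arcs. Once this contrapositive is in hand, applying Lemma \ref{lemma4.2} with parameter $\te_0=\te/(1+\eps)$ yields exactly $|S_N(d,\s,\bal)|\ls_\tau N_1^{n-K\te_0+\tau}=N_1^{n-K\te/(1+\eps)+\tau}$, uniformly in $\s$, as desired.
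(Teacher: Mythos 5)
Your proposal follows essentially the same route as the paper: choose $\te_0=\te/(1+\eps)$, take the contrapositive of the inclusion $\{d^k\bal\}\in\MM(\te_0)\Rightarrow\bal\in\MM(\te)$ by rescaling the rational approximation by $d^k$ and reducing to lowest terms, then invoke Lemma~\ref{lemma4.2} with parameter $\te_0$. The parameter bookkeeping is also the same, and it contains the same small looseness as in the paper's own proof: the linear constraint you derive is $\eta\leq\eps r(1-k^{-1})\te/(1+\eps)$, whereas the stated hypothesis is $\eta\leq\eps r(1-k^{-1})\te$, so there is a residual factor of $(1+\eps)$; since $\eps<1$ this is harmless for the downstream applications (which choose $\te_0,\eta$ with room to spare) but strictly speaking the hypothesis as written does not imply the inequality you need.
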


\begin{proof} If $d^k\bal\in \MM_{\baa,q}(\te)$ (mod\ 1), then there is $q\leq N_1^{r(k-1)\te}$ and $a_i\in \Z$ such that $(a_i,q)=1$ and $|d^k\al_i-a_i/q|\leq q^{-1} N_1^{-k+(k-1)r\te}$. This implies that $|\al_i-a'_i/q_1|\leq q_1^{-1} N_1^{-k+(k-1)r\te}$ for some $q_1\leq d^k N_1^{(k-1)r\te}$ and $a_i'\in\Z$ for which $(a'_i,q_1)=1$. If $d\leq N^{\frac{\eta}{1+\eta}}$ then $d\leq N_1^\eta$ and hence $q_1\leq N_1^{k\eta+r(k-1)\te}\leq N_1^{(1+\eps)r(k-1)\te}$. This implies that $\al\notin \MM((1+\eps)\te$. By taking the contrapositive and changing variables $\te:=(1+\eps)\te$ the Lemma follows.
\end{proof}

As a first application we give an estimate for the Gauss sums

\eq\label{4.8}
S_{\baa,q}(d,\s):=\sum_{\x\in\Z_q^n} e^{2\pi i\frac{\baa\cdot\F(d\x+\s)}{q}}.
\ee

\begin{lemma}\label{lemma4.4} Let $q\in\N$ and $1\leq d < q^{\frac{\eps}{k}}$. Then for any $\baa\in \Z^r$ such that $(\baa,q)=1$ and $\s\in\Z^d$ one has
\eq\label{4.9}
|S_{\baa,q}(d,\s)|\ls_\tau q^{n-\frac{K}{(1+\eps)r(k-1)}+\tau}\ \ \ \ \ (\forall\ \tau>0).
\ee
\end{lemma}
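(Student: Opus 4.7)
The plan is to realize the Gauss sum $S_{\baa,q}(d,\s)$ as a special value of the partial exponential sum $S_N(d,\s,\bal)$ at a rational frequency, and then to apply the minor arcs estimate of Lemma \ref{lemma4.3}.

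First I would take $N := dq$, so that $N_1 = q$. The inequality $1 \leq dx_i + s_i \leq dq$ picks out exactly one integer $x_i$ in each residue class modulo $q$, hence the constraints $\phi_N(d\x+\s)=1$ select a complete residue system $\x \in \Z_q^n$. Since $\baa \cdot \F(d\x+\s)/q \pmod 1$ depends only on $\x \pmod q$, periodicity yields
\[
S_N(d,\s,\baa/q)\,=\,\sum_{\x \in \Z_q^n} e^{2\pi i\, \baa \cdot \F(d\x+\s)/q}\,=\,S_{\baa,q}(d,\s).
\]
For $N_1 = q$ and $(\baa,q)=1$, the frequency $\baa/q$ lies in $\MM_{\baa,q}(\te)$ iff $q \leq q^{r(k-1)\te}$, so $\baa/q \notin \MM(\te)$ for every $\te < 1/(r(k-1))$.

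I would then invoke Lemma \ref{lemma4.3} with the parameter choices
\[
\te = \frac{1 - \kappa\,r(k-1)}{r(k-1)},\qquad \eps' = \frac{\eps}{1 - \kappa\,r(k-1)},\qquad \eta' = \frac{\eps}{k},
\]
where $\kappa > 0$ is a small auxiliary parameter. A direct computation shows that the compatibility condition $\eta' \leq \eps' r(1 - 1/k)\te$ collapses to the identity $\eps/k = \eps/k$, while the hypothesis $d \leq N^{\eta'/(1+\eta')}$ reduces to $d \leq q^{\eta'} = q^{\eps/k}$, which holds by the standing assumption. The lemma then produces
\[
|S_{\baa,q}(d,\s)| \,\ls_\tau\, q^{\,n - K\te/(1+\eps') + \tau} \,=\, q^{\,n - K(1-\kappa r(k-1))^2/(r(k-1)(1 + \eps - \kappa r(k-1))) + \tau}.
\]
Sending $\kappa \to 0$, the exponent tends to $n - K/((1+\eps)r(k-1))$, and the vanishing slack can be absorbed into a harmless enlargement of $\tau$, yielding the estimate \eqref{4.9}.

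The main subtlety is the parameter balancing inside Lemma \ref{lemma4.3}. The constraint $\eta' \geq \eps/k$, combined with $\eta' \leq \eps' r(1 - 1/k)\te$, forces $\eps' \geq \eps/(r(k-1)\te)$, so the bound exponent $\te/(1+\eps')$ cannot exceed $r(k-1)\te^2/(r(k-1)\te + \eps)$. This upper envelope is increasing in $\te$ and attains the target $1/((1+\eps)r(k-1))$ only in the limit $\te \to 1/(r(k-1))$, i.e.\ exactly at the threshold where $\baa/q$ transitions from the minor to the major arcs. Pushing $\te$ arbitrarily close to this threshold is the key maneuver; the extra factor $(1+\eps)^{-1}$ in the exponent of \eqref{4.9} is the unavoidable cost of combining Weyl differencing with the rescaling encoded in the $d^k$ prefactor appearing in Lemma \ref{4.1}.
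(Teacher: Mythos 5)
Your proposal is correct and follows the same strategy as the paper: write $S_{\baa,q}(d,\s)=S_N(d,\s,\baa/q)$ with $N=dq$ (so $N_1=q$), verify $\baa/q\notin\MM(\te)$ for $\te<1/(r(k-1))$ using the coprimality spacing bound, and invoke Lemma~\ref{lemma4.3} with $\te$ pushed to the threshold. Your bookkeeping with $\kappa$ and the inflated $\eps'=\eps/(1-\kappa r(k-1))$ is a slightly different way of arranging the parameters than the paper's (which holds $\eps$ fixed and lets the internal $\eta=\eps r(1-k^{-1})\te$ approach $\eps/k$ from below), but it lands on the identical application of the minor-arc estimate and the same limiting exponent, so the two arguments are the same proof in substance.
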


\begin{proof} Note that $S_{\baa,q}(d,\s)=S_N(d,\s,\baa/q)$ with $N=dq$, as $\x\in [0,q)^n$ if $d\x+\s\in B_N=[0,dq)^n+\s$. Moreover  if $r(k-1)\te<1$ then for any $1\leq q'\leq q^{r(k-1)\te}<q$ and $(\baa',q')=1$
\[\left|\frac{\baa}{q}-\frac{\baa'}{q'}\right|\geq \frac{1}{qq'}> \frac{1}{q'}\  q^{-k+r(k-1)\te}.\]

This implies that $\baa/q\notin \MM(\te)$. Since $d < q^{\frac{\eps}{k}}$ we can choose $\te$ so that $r(k-1)\te<1$ but $d<q^\eta$ for $\eta:=\eps r (1-k^{-1})\eta$. The \eqref{4.7} implies that
\[|S_{\baa,q}(d,\s)|\ls_\tau q^{n-\frac{K}{(1+\eps)}\te +\tau} \ls_\tau q^{n-\frac{K}{(1+\eps)r(k-1)}+\tau}\ \ \ \ \ (\forall\ \tau>0),\]
choosing $\te$ sufficiently close to $\frac{1}{r(k-1)}$.
\end{proof}
$\ $\\
Taking $d=1$ and letting $\eps\to 0$ in \eqref{4.9} one has
\[S_{\baa,q}(1,\s)=S_{\baa,q}(1,0) \ls_\tau\ q^{n-\frac{K}{r(k-1)}+\tau}.\]
\\
Next, to apply \cite{Bi}, Lemma 4.4 adapted to our situation, we make the assumption that
\eq\label{4.10}
K>(1+\eps)r(r+1)(k-1).\ee
Then one can chose small positive numbers $\de$ and $\te_0$ so that
\eq\label{4.11}
\de+2r(r+2)\te_0\,<1\ee
and
\eq\label{4.12}
2\de\te_0^{-1}\,<\,K(1+\eps)^{-1}-r(r+1)(k-1).\ee

\begin{lemma}\label{lemma4.5} Let $\de,\te_0$ satisfy \eqref{4.10}-\eqref{4.11}, and let $0<\eta\leq\eps (1-k^{-1})\te_0$. Then for $1\leq d\leq N^\frac{\eta}{1+\eta}$ and $\s\in\Z^n$ one has
\eq\label{4.13}
\int_{\al\notin \MM(\te_0)} |S_N(d,\s,\al)|\,d\al \ls\ N_1^{n-dr-\de}.
\ee
\end{lemma}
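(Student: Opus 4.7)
My plan is to follow the classical Birch--Davenport strategy: combine the pointwise Weyl bound from Lemma \ref{lemma4.3} with a union-bound estimate for the measure of the major arcs, interfacing the two through a layer-cake decomposition of the level sets of $|S_N(d,\s,\al)|$. The first ingredient is the standard volume count
\[
|\MM(\te)|\,\ls\,N_1^{-kr+(k-1)r(r+1)\te+\tau},\qquad \tau>0,
\]
obtained by summing $q^r\cdot (2q^{-1}N_1^{-k+(k-1)r\te})^r$ over $q\leq N_1^{(k-1)r\te}$. The second is the contrapositive of Lemma \ref{lemma4.3}: if $|S_N(d,\s,\al)|>c_\tau N_1^{n-K\te/(1+\eps)+\tau}$ for some $\te\geq\te_0$, then $\al\in\MM(\te)$. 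The admissibility hypothesis $\eta\leq\eps(1-k^{-1})\te_0$ ensures that Lemma \ref{lemma4.3} applies at every such $\te$, since $\te_0\leq\te$ and $r\geq 1$ provide room.

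I would then introduce an arithmetic-progression grid $\te_j:=\te_0+j\De$ for $j=0,1,\ldots,J$, where $\De>0$ is a small constant to be chosen below and $J$ is the smallest integer with $K\te_J/(1+\eps)\geq kr+\de$; the existence of $\te_J$ inside the admissible range is guaranteed by \eqref{4.10} together with \eqref{4.11}. Writing $t_j:=c_\tau N_1^{n-K\te_j/(1+\eps)+\tau}$, the contrapositive yields the inclusion
\[
\{\al\notin\MM(\te_0):\ |S_N(d,\s,\al)|>t_{j+1}\}\,\subseteq\,\MM(\te_{j+1})\setminus\MM(\te_0)\,\subseteq\,\MM(\te_{j+1}),
\]
so the layer-cake formula gives
\[
\int_{\al\notin\MM(\te_0)}|S_N(d,\s,\al)|\,d\al\,\leq\,\sum_{j=0}^{J-1}t_j\,|\MM(\te_{j+1})|\,+\,t_J.
\]
After substituting the two bounds, the $j$-th summand has exponent $n-kr+\te_j\bigl(-K/(1+\eps)+(k-1)r(r+1)\bigr)+(k-1)r(r+1)\De+\tau$.

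The hard part is to tune $\De$ so that two things hold simultaneously: by \eqref{4.10} the coefficient $-K/(1+\eps)+(k-1)r(r+1)$ is strictly negative, making the sum geometric in $j$ with a fixed positive ratio-power saving $N_1^{-\De(K/(1+\eps)-(k-1)r(r+1))}$, so that the total is dominated by the $j=0$ term; and by \eqref{4.12} one has $\te_0\bigl(K/(1+\eps)-(k-1)r(r+1)\bigr)>2\de$, so that the $j=0$ contribution has exponent at most $n-kr-2\de+(k-1)r(r+1)\De+\tau$. Choosing $\De$ small enough that $(k-1)r(r+1)\De<\de/2$, and $\tau$ smaller still, produces a net bound of at most $N_1^{n-kr-\de}$; condition \eqref{4.11} guarantees that $J\De$ can be taken in a range where all the measure estimates remain meaningful. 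The tail $t_J$ is controlled by the same $N_1^{n-kr-\de}$ bound by our choice of $J$, completing the argument.
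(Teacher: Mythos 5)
Your proof is correct and follows exactly the Birch--Davenport strategy that the paper itself invokes (the paper gives no independent proof of this lemma, simply noting that ``the rest of the arguments in \cite{Bi} carry over''): a union-bound volume estimate for $\MM(\te)$, the Weyl bound of Lemma~\ref{lemma4.3}, and a dyadic/layer-cake decomposition over a grid of $\te$ values. One small remark: the exponent in \eqref{4.13} should read $n-kr-\de$ rather than $n-dr-\de$ (a notational slip in the paper, where $d$ elsewhere denotes the modulus), and you correctly work with $n-kr-\de$ throughout.
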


If in addition we make the assumption that
\eq\label{4.14}
\eta<\de k^{-1}r^{-1},
\ee
then it is easy to see that
\eq\label{4.15}
N_1^{n-dr-\de}=N^{n-dr}d^{-n}\, N^{-\de}d^{kr+\de}\leq N^{n-dr}d^{-n}\, N^{-\de+(kr+\de)\eta (1+\eta)^{-1}}\leq N^{n-dr-\de'}d^{-n},
\ee
for some $\de'>0$.
\\\\
Going back to Proposition 1.2, we have under the conditions of Lemma \ref{4.5} and \eqref{4.14}

\[\RR_N(d,\s,\vv)=\int e^{-2\pi i\,\al\cdot\vv}\ S_N(d,\s;\al)\ d\al=\int_{\MM'(\te_0)} e^{-2\pi i\,\al\cdot\vv}\ S_N(d,\s;\al)\ d\al + O(N^{n-rd-\de'}d^{-n}),\]
for any set $\MM'(\te_0)\supseteq \MM(\te_0)$. From now on we will write
\eq\label{4.16} r(k-1)\te_0=\kappa,\ee
and define

\eq\label{4.17}
\MM'(\te_0):=\bigcup_{1\leq q\leq N_1^\kappa}
\bigcup_{(\baa,q)=1}\MM'_{\baa,q}(\te_0),\ \ \textit{where}\ee
\eq\label{4.18}
\MM'_{\baa,q}(\te_0):=\{\al\in [0,1]^r;\ |\al_i-a_i/q|\leq N_1^{-k+\kappa},\ 1\leq i\leq r\}.
\ee
\\
Next, for given $\al\in \MM'_{\baa,q}(\te_0)$, writing $\al=\baa/q+\be$ one has the following approximation of the sum $S_N(d,\s;\al$ (see \cite{Bi}, Lemma 5.1).

\begin{lemma}\label{4.6}
Let $0<\eta\leq \frac{1}{2},\ d\leq N^\frac{\eta}{1+\eta},\ \s\in\Z^n$. Then for $\al\in\MM'_{\baa,q}(\te_0)$
\eq\label{4.19}
S_N(d,\s;\al)=N^n d^{-n} q^{-n}S_{\baa,q}(d,\s)\,I(N^k\beta)\,+\,O(N^{n-1+2\eta+\kappa}d^{-n}),
\ee
where
\eq\label{4.20}
I(\ga):= \int_{\R^r} e^{2\pi i\ga\cdot \F(\y)} \phi(\y)\,d\y.
\ee
\end{lemma}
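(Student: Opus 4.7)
My plan is to follow the standard major-arc approximation of the Hardy--Littlewood circle method, adapted to summation over the translated sublattice $d\Z^n+\s$. First I would write $\al=\baa/q+\be$ and decompose $\x$ modulo $q$ by setting $\x=q\y+\z$, with $\z$ ranging over $\Z_q^n$ and $\y$ over $\Z^n$. The key arithmetic observation is that each $F_i$ has integer coefficients, so $\F(dq\y+d\z+\s)\equiv \F(d\z+\s)\pmod q$ for every $\y$, making the factor $e^{2\pi i(\baa/q)\F(d\x+\s)}$ depend only on $\z$. This gives the decomposition
\[
S_N(d,\s;\al)=\sum_{\z\in\Z_q^n} e^{2\pi i(\baa/q)\F(d\z+\s)}\,T(\z,\be),
\]
where $T(\z,\be):=\sum_{\y\in\Z^n} e^{2\pi i\be\cdot\F(dq\y+d\z+\s)}\,\phi_N(dq\y+d\z+\s)$.

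Next, the main term will emerge from replacing $T(\z,\be)$ by its Riemann-integral analogue $\tilde T(\be)$ obtained by turning the $\y$-sum into an integral over $\R^n$. The substitution $\x=dqt+d\z+\s$ would remove the $\z$-dependence and give $\tilde T(\be)=(dq)^{-n}\int_{\R^n} e^{2\pi i\be\cdot\F(\x)}\phi_N(\x)\,d\x$; a further rescaling $\x=N\y$ combined with the degree-$k$ homogeneity of $\F$ would convert this into $N^n(dq)^{-n}I(N^k\be)$. Summing over $\z$ then produces the main term $N^n d^{-n}q^{-n}S_{\baa,q}(d,\s)\,I(N^k\be)$, which is exactly the claimed form.

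The remaining work is to bound the sum-to-integral error $\sum_\z|T(\z,\be)-\tilde T(\be)|$. On the interior of the support of $\psi_\z(t):=e^{2\pi i\be\cdot\F(dqt+d\z+\s)}\phi_N(dqt+d\z+\s)$ (a box of side $\sim N_1/q$ in $t$-space), I would invoke the mean value theorem cube by cube: $\sup|\nabla_t\psi_\z|\ls|\be|\cdot dq\cdot\sup_{B_N}|\nabla\F|\ls N_1^{-k+\kappa}\cdot dq\cdot N^{k-1}$. Near the boundary, where $\phi_N$ jumps, I would resort to the trivial bound $|\psi_\z|\leq 1$ on the $\sim(N_1/q)^{n-1}$ boundary cubes. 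After multiplying by the $q^n$ residue classes and inserting the major-arc bounds $|\be|\leq N_1^{-k+\kappa}$, $q\leq N_1^\kappa$, together with $d\leq N_1^\eta$ (a consequence of $d\leq N^{\eta/(1+\eta)}$ and $N=dN_1$), the aggregate error should simplify to $O(N^{n-1+2\eta+\kappa}d^{-n})$.

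The main obstacle is the exponent bookkeeping in this last step: the three small parameters $\eta$ (controlling the size of $d$), $\kappa=r(k-1)\te_0$ (the major-arc radius), and the degree $k$ must be balanced so that both the interior contribution of order $d^kN_1^{n-1+2\kappa}$ and the boundary contribution of order $qN_1^{n-1}$ fit inside $N^{n-1+2\eta+\kappa}d^{-n}=N_1^{n-1+2\eta+\kappa}d^{-1+2\eta+\kappa}$. This is a routine but delicate calculation, paralleling the analogous step of Birch's Lemma 5.1 with the extra parameter $d$ inserted throughout.
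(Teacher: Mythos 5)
Your proposal follows exactly the route the paper takes: decompose $\x=q\y+\z$ so that the factor $e^{2\pi i(\baa/q)\cdot\F(d\x+\s)}$ depends only on $\z$ and produces $S_{\baa,q}(d,\s)$, replace the remaining $\y$-sum by an integral using the smallness of the derivative (variation $O(|\be|N^{k-1}qd)$) plus a trivial bound on the $O((N/dq)^{n-1})$ boundary cells, and then rescale $\x=N\y$ to recover $N^n(dq)^{-n}I(N^k\be)$. The exponent bookkeeping you flag as the remaining obstacle is precisely what the paper dispatches summarily (it bounds the variation by $O(N^{-1+2\kappa+\eta})$ using $|\be|\leq N_1^{-k+\kappa}$, $q\leq N_1^\kappa$, $d\leq N_1^\eta$), so your plan is essentially the paper's proof.
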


\begin{proof} Writing $\x:=q\y+\z$ with $\z\in [0,q)^n$, we have
\eq\label{4.21}
S_N(d,\s;\al)=\sum_{\z\in\Z_q^n} e^{2\pi i \frac{\baa\cdot \F(d\z+\s)}{q}} \sum_{\y\in\Z^n} e^{2\pi i \be\cdot \F(qd\y+d\z+\s)} \phi_N (qd\y+d\z+\s).
\ee
As $\y$ varies by $O(1)$ in the range $|qd\y|\ls N$, the variation in the exponent is
\[O(|\be|N^{k-1} qd)=O(N^{-1+2\kappa+\eta}).\]
Thus the error in replacing the sum $\sum e^{2\pi i \be\cdot \F(qd\y+d\z+\s)} \phi_N (qd\y+d\z+\s)$ by the integral
\[\int_{\y\in\R^r} e^{2\pi i \be\cdot \F(qd\y+d\z+\s)} \phi_N (qd\y+d\z+\s)\,d\y,\]
is $O(N^{n-1+2\kappa+\eta})$ + $O((N/dq)^{n-1})$. By a change of variables $\y:=N^{-1}(qd\y+d\z+s)$ we have
\[\int_{\y\in\R^r} e^{2\pi i \be\cdot \F(qd\y+d\z+\s)} \phi_N (qd\y+d\z+\s)\,d\y =
N^{n}d^{-n}q^{-n} I(N^k\be).\]
Summing over $\z\in\Z_q^n$, using \eqref{4.21} and \eqref{4.8} proves \eqref{4.19}.
\end{proof}

For $\mu\in\R^r$ and $\Phi>0$, write
\[J(\mu;\Phi):= \int_{|\ga_i|\leq \Phi} I(\ga)\,e^{-2\pi i \ga\cdot\mu}\,d\ga,\]
and define
\eq\label{4.22}
J(\mu):= \lim_{\Phi\to\infty} J(\mu;\Phi).
\ee

By Lemma 5.2 and Lemma 5.3 in \cite{Bi}, $J(\mu)$ exists, continuous and uniformly bounded by
\[\int_{\R^r} |I(\ga)|\,d\ga <\infty.\]

Also using assumption \eqref{4.9} and estimate \eqref{4.10} we have that the so-called \emph{singular series}
\eq\label{4.23}
\Si(d,\s;\vv):=\sum_{q=1}^\infty \sum_{(\baa,q)=1} q^{-n} e^{-2\pi i\,\frac{\baa\cdot\vv}{q}} S_{\baa,q}(d,\s)
\ee
is absolute convergent. In fact,

\eq\label{4.24}
\sum_{q\geq N_1^\kappa} \sum_{(\baa,q)=1} q^{-n} |S_{\baa,q}(d,\s)|\ls N^{-\de}.
\ee
\\
Indeed, as $\kappa=r(k-1)\te_0$ we have by assumption \eqref{4.10}
\[\frac{2\de}{\kappa}<\frac{K}{(1+\eps)r(k-1)} - r-1.\]
\\
Then by estimate \eqref{4.9}

\[\sum_{q\geq N_1^\kappa} \sum_{(\baa,q)=1} q^{-n} |S_{\baa,q}(d,\s)|
\ls_\tau \sum_{q\geq N_1^\kappa} q^{-\frac{2\de}{\kappa}+\tau} \ls_\tau N_1^{-2\de+\tau} \ls_\tau N^{-2\de+\de\eta+\tau}\ls N^{-\de}.\]

Summarizing we have

\begin{prop}\label{prop4.1} Let $\F=(F_1,\ldots,F_r)$ be a family of integral forms of degree $k$ satisfying the rank condition
\eq\label{4.25}
K:=\frac{codim\,V_\F^*}{2^{k-1}} > r(r+1)(k-1).
\ee
\\
There exists a constant $\de'=\de'(k,r)>0$ such that the following holds.\\

(i) If $0<\eta \leq \frac{1}{4r^2(r+1)(r+2)k^2}$ then for every $1\leq d\leq N^\frac{\eta}{1+\eta}$ and $\s\in\Z^n$ one has the asymptotic

\eq\label{4.27}
\RR_N(d,\s;\vv)=N^{n-rk}d^{-n} \Si(d,\s,\vv)\,J(N^{-k}\vv)\,+\,O(N^{n-rk-\de'}d^{-n}).
\ee
\\
(ii) Moreover if
\eq\label{4.28}
K > 2r(r+1)(k-1)+2rk,
\ee
then the asymptotic formula \eqref{4.27} holds for $\eta\leq \frac{1}{4r(r+2)k}$.
\end{prop}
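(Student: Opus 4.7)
The plan is to carry out the Hardy--Littlewood circle method using the asymptotic machinery already assembled in this section, then optimize the parameters $\te_0, \de, \eps, \eta$ against the two rank hypotheses.

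First, by orthogonality,
\[
\RR_N(d,\s;\vv) \;=\; \int_{[0,1]^r} e^{-2\pi i \al\cdot \vv}\, S_N(d,\s;\al)\, d\al.
\]
I split this integral over the major arcs $\MM'(\te_0)$ defined in \eqref{4.17}--\eqref{4.18} and their complement. For the minor arcs, Lemma \ref{lemma4.5} combined with the elementary estimate \eqref{4.15} gives directly
\[
\int_{\al\notin \MM'(\te_0)}|S_N(d,\s;\al)|\,d\al \;\ls\; N^{n-rk-\de'}d^{-n},
\]
provided the parameters $\de, \te_0, \eps, \eta$ satisfy \eqref{4.11}, \eqref{4.12}, $\eta \le \eps(1-k^{-1})\te_0$, and \eqref{4.14}; here I use that $\MM(\te_0)\subset \MM'(\te_0)$.

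On each major arc $\MM'_{\baa,q}(\te_0)$ I write $\al = \baa/q + \be$ and replace $S_N(d,\s;\al)$ by $N^n d^{-n} q^{-n} S_{\baa,q}(d,\s)\, I(N^k\be)$ using Lemma \ref{4.6}, with an error of size $O(N^{n-1+2\eta+\kappa}d^{-n})$ per arc. Multiplying by the measure of each arc $(\ll N_1^{-rk+r\kappa})$ and summing over $1\le q\le N_1^\kappa$ and reduced $\baa$, the total error from this approximation is a power of $N$ smaller than the main term provided $\kappa$ and $\eta$ are small enough, which is guaranteed by our choice of $\te_0$. After a change of variable $\ga = N^k\be$, the main term becomes
\[
N^{n-rk}d^{-n}\!\!\!\sum_{q\le N_1^\kappa}\sum_{(\baa,q)=1} q^{-n}\,e^{-2\pi i\,\baa\cdot\vv/q}\,S_{\baa,q}(d,\s)\; J(N^{-k}\vv;\,N_1^{r(k-1)\te_0}).
\]
To complete the singular series I invoke \eqref{4.24}, which controls the tail $q\ge N_1^\kappa$; to complete the singular integral I use the absolute convergence of $\int_{\R^r}|I(\ga)|\,d\ga$ and the continuity of $J(\mu)$ from \cite{Bi}, Lemmas 5.2--5.3, whose tail $|\ga|\ge N_1^{r(k-1)\te_0}$ contributes $O(N^{-\de''})$ for some $\de''>0$. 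Combining both truncation errors into $\de'$ yields \eqref{4.27}.

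The main obstacle --- really the only non-routine step --- is to choose the parameters $\eps, \te_0, \de$ so that the three constraints \eqref{4.11}, \eqref{4.12}, together with $\eta\le \eps(1-k^{-1})\te_0$ and $\eta< \de/(kr)$, admit a solution with the prescribed bound on $\eta$. For part (i), the hypothesis $K>r(r+1)(k-1)$ allows $\eps$ of order $1/(r^2 k)$, and the binding constraint becomes $\eta\le \eps(1-k^{-1})\te_0$ with $\te_0$ of order $1/(r(r+2)k)$; tracking constants gives $\eta\le (4r^2(r+1)(r+2)k^2)^{-1}$. For part (ii), the stronger rank hypothesis $K>2r(r+1)(k-1)+2rk$ lets us take $\eps$ close to $1$, so that $K(1+\eps)^{-1}-r(r+1)(k-1)\ge rk$ and \eqref{4.12} is satisfied with $\de$ of order $rk\te_0$, which removes the $1/r$ loss and yields the improved bound $\eta\le (4r(r+2)k)^{-1}$. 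Everything else is bookkeeping of the Birch-style estimates already recorded.
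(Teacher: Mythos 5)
Your proposal is correct and follows essentially the same route as the paper's own proof: decomposition into the major arcs $\MM'(\te_0)$ and their complement, Lemma \ref{lemma4.5} plus \eqref{4.15} for the minor arcs, Lemma \ref{4.6} and a scaling $\ga=N^k\be$ for the major arcs, completion of the singular series via \eqref{4.24} and the singular integral via Birch's Lemmas 5.2--5.3, and then the same parameter optimization under condition \eqref{4.26} --- taking $\eps$ of order $1/(r(r+1)k)$ for part (i) and $\eps$ close to $1$ under the stronger rank hypothesis for part (ii). The paper works this out with the explicit choices $\te_0=(2r(r+2)k+1)^{-1}$ and $\de=\tfrac{\te_0}{2}\min\{1,\,K(1+\eps)^{-1}-r(r+1)(k-1)\}$ and verifies $2\kappa+\eta<\tfrac13$ to control the major-arc error, which is the bookkeeping you sketch but do not fully write out; the substance is identical.
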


\begin{proof} First we show that
if $0<\eps\leq 1$ satisfies $\eps<\frac{K}{r(r+1)(k-1)}-1$ and $\eta>0$ is such that
\eq\label{4.26}
\eta < \frac{1}{4r(r+2)k}\ \min\,\left\{\eps,\ \frac{K-(1+\eps)r(r+1)(k-1)}{r k(1+\eps)}\right\},
\ee
then \eqref{4.27} holds for $1\leq d\leq N^\frac{\eta}{1+\eta}$ and $\s\in\Z^n$.
\\

Set the parameters $\te_0$ and $\de$ as
\[\te_0:=\frac{1}{2r(r+2)k+1}\ ,\ \  \de:=\frac{\te_0}{2}\,\min\left\{1,\ \frac{K}{1+\eps}-r(r+1)(k-1)\right\},
\]
to satisfy conditions \eqref{4.11} and \eqref{4.12}. Then for
\[\eta< \frac{1}{4r(r+2)k}\ \min\left\{\eps,\ \frac{K-(1+\eps)r(r+1)(k-1)}{rk(1+\eps)}\right\},\]
we have that $\eta<\eps(1-k^{-1}\te_0$ and $\eta<\te_0 k^{-1}{r^{-1}}$ hence both the conditions of Lemma \ref{4.5} and \eqref{4.14} are fulfilled.

Thus by \eqref{4.16}, \eqref{4.19}, \eqref{4.24} and using the fact that $|\MM'(\te_0)|\leq N_1^{(r+1)\kappa -rk+r\eta} \leq N^{-rk+\frac{2}{3}}$ (by our choice of $\te_0,\ \eta$), we have that

\begin{eqnarray}
\RR_N(d,\s,\vv) &=& \sum_{q\leq N_1^\kappa} \sum_\baa \int_{\MM'_{\baa,q}(\te_0)} e^{-2\pi i\,\al\cdot\vv} S_N(d,\s;\al)\ d\al\ +\ O(N^{n-rk-\de'} d^{-n})\nonumber\\
&=& N^n d^{-n} \left(\ \sum_{q\leq N_1^\kappa} q^{-n} e^{-2\pi i\,\frac{\baa\cdot\vv}{q}} S_{\baa,q}(d,\s) \int_{|\be_i|\leq N_1^{-k+\kappa}} e^{-2\pi i \be\cdot\vv} I(N^k\be)\,d\be\ +\ O(N^{-rk-\frac{1}{3}+2\kappa+\eta})\right) \nonumber\\
&=& N^{n-rk} d^{-n} \left(\ \sum_{q\leq N_1^\kappa} q^{-n} e^{-2\pi i\,\frac{\baa\cdot\vv}{q}} S_{\baa,q}(d,\s) J(N^{-k}\vv;d^kN_1^\kappa)+O(N^{-\de'})\right)\nonumber\\
&=& N^{n-rk} d^{-n}\ \Si(d,\s;\vv) J(N^{-k}\vv)\ +\ O(N^{n-rk-\de'} d^{-n}),
\end{eqnarray}
for some $\de'=\de'(r,k)>0$.
\\\\
Indeed, the first line is \eqref{4.16}, the second line follows from \eqref{4.19} and the above remark on the size of the major arcs, the third line by a scaling $\be:=N^k\be$ and the last line from \eqref{4.?} and \eqref{4.24} together with the estimate $2\kappa+\eta\leq \frac{k-1}{k(r+2)}+\frac{1}{4r(r+2)k}\leq \frac{1}{r+2}(1-\frac{3}{4k})<\frac{1}{3}$.

If $K>r(r+1)(k-1)$ then $\frac{K}{r(r+1)(k-1)}-1\geq \frac{1}{r(r+1)(k-1)}$ thus one may choose $\eps$ slightly larger than $\frac{1}{r(r+1)k}$. This gives $\eta\leq \frac{1}{4r^2(r+1)(r+2)k^2}$ by \eqref{4.26}. If $K>2r(r+1)(k-1)+2rk$ then one may choose $\eps$ slightly larger than 1, which gives $\eta\leq \frac{1}{4r(r+2)k}$. This proves the Proposition.
\end{proof}

Finally, consider the singular series
\eq\label{4.29}
\Si(d,\s,\vv) = \sum_{q=1}^\infty q^{-n} \sum_{(\baa,q)=1} e^{-2\pi i \frac{\baa\cdot\vv}{q}} S_{\baa,q} (d,\s),
\ee
where writing $\F_{d,\s}(\x):=\F(d\x+\s)$
\[S_{\baa,q} (d,\s)=\sum_{\x\in\Z_q^n} e^{2\pi i\frac{\F_{d,\s}(\x)\cdot a}{q}}.\]
By the well-known multiplicative properties of the inner sums in \eqref{4.29}
\[\Si(d,\s,\vv)=\prod_{p\ prime} \si_p(d,\s,\vv),\]
with local factors
\[\si_p(d,\s,\vv)=\sum_{m=0}^\infty p^{-mn} \sum_{(\baa,p^m)=1} e^{-2\pi i \frac{\baa\cdot\vv}{p^m}} S_{\baa,p^m} (d,\s).\]
By estimate \eqref{4.9} and assumption \eqref{4.10} we have $\si_p(d,\s,\vv)=1+O(p^{-1-\de'})$ and hence the product is absolutely and uniformly convergent. Finally, by a straightforward calculation we have
\eq\label{4.30}
\si_p^l(d,\s;\vv):=\sum_{m=0}^l p^{-mn} \sum_{(\baa,p^m)=1} e^{-2\p i\frac{\baa\cdot\vv}{p^m}} S_{\baa,p^m}(d,\s)=p^{-l(n-r)}\,|\{\x\in\Z_{p^l}^n;\ \F_{d,\s}(\x)=\vv\}|.
\ee
Proposition 1.1 follows immediately from Proposition 4.1.

\vspace{.5in}

\bigskip


\begin{thebibliography}{18}

\bibitem{Bal}{\sc A. Balog}, {\em Linear equations in primes}, Mathematika 39.2 (1992): 367-378

\bibitem{BGS}{\sc J. Bourgain, A. Gamburd, P. Sarnak} {\em Affine linear sieve, expanders, and sum-product}, Invent. Math. 179, (2010): 559-644

\bibitem{Bi} {\sc B. Birch}, {\em Forms in many variables}, Proc. Royal Soc. London. Ser. A. 265/1321 (1962): 245-263.

\bibitem{Br} {\sc J. Brüdern,J. Dietmann,J. Liu,T. Wooley} {\em A Birch-Goldbach theorem} Archiv der Mathematik, 94(1), (2010): 53-58

\bibitem{Bru} {\sc J. Brüdern}, {\em A sieve approach to the Waring-Goldbach problem II. On the seven cubes theorem.} Acta Arithmetica-Warszava, 72, (1995): 211-227.

\bibitem{CM1}{\sc B. Cook, A. Magyar}, {\em Diophantine equations in the primes}, Invent. Math. 198/3, (2014): 701-737

\bibitem{CM2}{\sc B. Cook, A. Magyar}, {\em On restricted arithmetic progressions over finite fields} Online J. Anal. Comb., v.7, (2012): 1-10

\bibitem{GL}{\sc S. Ghorpage and G. Lachaud} {\em Etale cohomology, Lefshetz theorems and number of points of singular verieties over finite fields}, Moscow Math. J. (2) (2002), 589-631

\bibitem{GMPY}{\sc D. Goldston, Y. Motohashi, J. Pintz, J., C. Yildirim}, {\em Small gaps between primes exist.} Proc. Japan Acad. Sci. Ser. A, 82.4 (2006): 61-65.

\bibitem{GPY} {\sc D. Goldston, J. Pintz, Yildirim}, {\em Primes in tuples I}, Ann. of Math. 170 (2009): 819-862

\bibitem{GT} {\sc B. Green and T. Tao}, {\em
The primes contain arbitrary long arithmetic progressions}, Annals of Math. 167 (2008), 481-547

\bibitem{GT1}{\sc B. Green, T. Tao}, {\em Linear equations in the primes}, Ann. of Math.(2) 171.3 (2010): 1753-1850.

\bibitem{GT2}{\sc B. Green T. Tao}, {\em Yet another proof of Szemer\'edi's theorem}, An irregular mind. Springer Berlin Heidelberg, (2010) 335-342.

\bibitem{HR}{H. Halberstam, H., H. E. Richert}, {\em Sieve methods.} Courier Corporation (2013)

\bibitem{Har}{\sc R. Hartshorne}, {\em Algebraic geometry},  Graduate Texts in Mathematics Vol. 52. Springer (1977)

\bibitem{Hen}{\sc K. Henriot}, {\em Logarithmic bounds for translation-invariant equations in squares.} International Mathematics Research Notices (2015): rnv062.

\bibitem{Hua} {\sc L.K. Hua}, {\em Additive theory of prime numbers}, Translations of Mathematical Monographs Vol. 13." Am. Math. Soc., Providence (1965).

\bibitem{Liu} {\sc J. Liu}, {\em Integral points on quadrics with prime coordinates}, Monats. Math. 164.4 (2011): 439-465.

\bibitem{Keil} {\sc E. Keil}, {\em Translation invariant quadratic forms in dense sets}, arXiv preprint arXiv:1308.6680 (2013).

\bibitem{Matt} {\sc M.L. Smith}, {\em On solution-free sets for simultaneous quadratic and linear equations.} Journal of the London Mathematical Society 79.2 (2009): 273-293.

\bibitem{PRS} {\sc W. Duke, Z. Rudnick, P. Sarnak}, {\em Density of integer points on affine homogeneous varieties}, Duke Math. J. 71.1 (1993): 143-179.

\bibitem{Roth} {\sc K.F. Roth}, {\em On certain sets of integers}, J. London Math. Soc. 1.1 (1953): 104-109.

\bibitem{Sch} {\sc W. Schmidt}, {\em The density of integral points on homogeneous varieties}, Acta Math. 154.3 (1985): 243-296.

\bibitem{Shm} {\sc G. Shimura}, {\em Reduction of algebraic varieties with respect to a discrete valuation of the basic field}, Amer. J. Math. (1955): 134-176.

\bibitem{TG} {\sc T. Tao},  The prime tuples conjecture, sieve theory, and the work of Goldston-Pintz-Yildirim, Motohashi-Pintz, and Zhang, (2013)

\bibitem{TZ} {\sc T. Tao, T. Ziegler} {\em The primes contain arbitrarily long polynomial progressions} Acta Math., Vol. 201/2, (2008): 213-305

\bibitem{Ti} {\sc E.C. Titchmarsh}, {The theory of the Riemann zeta-function.} v. 196. Oxford Univ. Press: Oxford, 1951.

\bibitem{Vin} {\sc I.M. Vinogradov}, {\em Representations of an odd integer as a sum of three primes} Goldbach Conjecture 4 (2002): 61.

\bibitem{Wol} {\sc J. Liu, T. D. Wooley, and G. Yu} {\em The quadratic Waring–Goldbach problem.} Journal of Number Theory 107.2 (2004): 298-321.

\end{thebibliography}
\end{document}